\def\0{\emptyset}
\newtheorem{theorem}{Theorem}[section]
\newtheorem{lemma}[theorem]{Lemma}
\newtheorem{claim}[theorem]{Claim}
\newtheorem{cor}[theorem]{Corollary}
\newenvironment{reproof}{{\noindent\it Proof of Theorem \ref{thm: exodd when n large enough}.}}{\hfill $\square$\par}
\newenvironment{reproofcor}{{\noindent\it Proof of Theorem  \ref{cor: ex(c2k)}.}}{\hfill $\square$\par}
\newenvironment{reprooff}{{\noindent\it Proof of Theorem \ref{thm: exeven when n large enough}.}}{\hfill $\square$\par}
\newenvironment{proof}{{\noindent\it Proof.}}{\hfill $\square$\par}
\newcommand{\excoddm}{ex(n, K_r, \{C_{\geq 2k+1}, M_{s+1}\})}
\newcommand{\excevenm}{ex(n, K_r,\{C_{\geq 2k}, M_{s+1}\})}
\newcommand{\lf}{\left\lfloor}
\newcommand{\rf}{\right\rfloor}
\newcommand{\lc}{\left\lceil}
\newcommand{\rc}{\right\rceil}
\newcounter{cases}
\newcounter{subcases}[cases]
\newenvironment{mycase}
{
    \setcounter{cases}{0}
    \setcounter{subcases}{0}
    \newcommand{\case}
    {
        \par\indent\stepcounter{cases}\textbf{Case \thecases.}
    }
    
}
{
    \par
}
\renewcommand*\thecases{\arabic{cases}}
\begin{document}


\title{Generalized Tur{\'a}n problems for a matching  and  long cycles}
\author{

    {\small\bf Xiamiao Zhao}\thanks{email:  zxm23@mails.tsinghua.edu.cn}\quad
    {\small\bf Mei Lu}\thanks{Corresponding author: email: lumei@tsinghua.edu.cn}\\
    {\small Department of Mathematical Sciences, Tsinghua University, Beijing 100084, China.}\\
}

\date{}

\maketitle\baselineskip 16.3pt

\begin{abstract}
Let $\mathscr{F}$ be a family of graphs. A graph $G$ is $\mathscr{F}$-free if  $G$ does not contain any $F\in \mathcal{F}$  as a subgraph. The general Tur\'an number, denoted by $ex(n, H,\mathscr{F})$, is the maximum number of copies of $H$ in an $n$-vertex $\mathscr{F}$-free graph. Then $ex(n, K_2,\mathscr{F})$, also denote by $ex(n, \mathscr{F})$, is the Tur\'an number. Recently, Alon and Frankl
determined the exact value of $ex(n, \{K_{k},M_{s+1}\})$, where $K_{k}$ and $M_{s+1}$ are a
complete graph on $k $ vertices and a matching of size $s +1$, respectively. Then many results were obtained by extending $K_{k}$ to a general fixed
graph or family of graphs. Let $C_k$ be a cycle of order $k$. Denote $C_{\ge k}=\{C_k,C_{k+1},\ldots\}$. In this paper, we determine the  value of $ex(n,K_r, \{C_{\ge k},M_{s+1}\})$ for  large enough $n$ and obtain the extremal graphs when $k$ is odd. Particularly, the exact value of $ex(n, \{C_{\ge k},M_{s+1}\})$ and the extremal graph are given for  large enough $n$.
\end{abstract}


{\bf Keywords:}  General Tur\'an number, matching number, cycles
\vskip.3cm

\section{Introduction}
In this paper, we only consider finite, simple and undirected graphs. Let $G=(V,E)$ be a graph, where $V$ is the vertex set and $E$ is the edge set of $G$. We use $v(G)$ and $e(G)$ to denote the order and size of $G$, respectively.

Let $\mathscr{F}$ be a family of graphs. A graph $G$ is $\mathscr{F}$-free if  $G$ does not contain any $F\in \mathcal{F}$  as a subgraph. The general Tur\'an number, denoted by $ex(n, H,\mathscr{F})$, is the maximum number of copies of $H$ in an $n$-vertex $\mathscr{F}$-free graph. When $H=K_2$, we write $ex(n,\mathscr{F})$ instead of $ex(n,H,\mathscr{F})$. Here $ex(n,\mathscr{F})$ is the
classical Tur\'an number. If $\mathscr{F}=\{F\}$, we write $ex(n,H,F)$ instead of  $ex(n,H, \mathscr{F})$.  Denote
$$Ex(n,H, \mathscr{F}):=\{G~|~ G~\text{is a $n$-vertex $\mathscr{F}$-free graph with $ex(n,H, \mathscr{F})$ copies of $H$}\}.$$

Many scholars examined the maximum number of edges without a cycle of at least $k$. Erd\H{o}s and Gallai \cite{gallai1959maximal} showed that if a graph $G$ with $n$ vertices has more than $\frac{1}{2}(k-1)(n-1)$ edges, then $G$ contains a cycle of length at least $k$, where $k\geq 3$. But the upper bound is tight only when $k-2$ divides $n-1$.  Woodall \cite{woodall1972sufficient} completed all the rest cases by proving that if $k\geq 3$ and $n=q(k-2)+p+1$, where $q\geq 0$ and $0\leq p\leq k-2$, and $G$ is a graph on $n$ vertices with more than $q{k-1\choose 2}+{p+1\choose 2}$ edges, then $G$ contains a cycles of length at least $k$. This result is best possible because we can find a graph $G_0$ that is $C_{\geq k}$-free and has $q{k-1\choose 2}+{p+1\choose 2}$ edges, where  $G_0$ is a graph consisting of $q$ copies of $K_{k-1}$ and one copy of $K_{p+1}$, all having exactly one vertex in common. Noticing that the extremal graph above is not $2$-connected. 
Kopylov \cite{kopylov1977maximal} and Woodall \cite{woodall1976maximal} independently determined the maximum size of a 2-connected graph without cycles of length at least $k$. For $2\leq a\leq (k-1)/2$,  define
$$  f_b(n,k,a) := {k-a\choose b}+(n-k+a){a\choose b-1}.$$  Let $k\ge 2a$ and $H_{n,k,a}$ be a graph obtained from $K_{k-a}$ by adding $n- (k-a)$ isolated vertices each joined to the same $a$ vertices
of $K_{k-a}$. Then $  f_2(n,k,a)=e(H_{n,k,a})$. Note that when $a \ge  2$, $H_{n,k,a}$ is 2-connected and has no cycle of length at least $k$. Also the matching number of $H_{n,k,a}$ is $a+\lf\frac{k-2a}{2}\rf=\lf\frac{k}{2}\rf$.

\begin{theorem}[Kopylov \cite{kopylov1977maximal} and Woodall \cite{woodall1976maximal}]\label{thm: ex(n,Cgeq k)}
    Let $5\leq k\leq n$ and let $t=\lf\frac{k-1}{2}\rf$. If $G$ is a 2-connected $n$-vertices graph with
    $$e(G)\geq \max\{f_2(n,k,2),f_2(n,k,t)\},$$
    then either $G$ has a cycle of length at least $k$, or $G=H_{n,k,2}$, or $G=H_{n,k,t}.$
\end{theorem}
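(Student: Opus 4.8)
The plan is to prove the following equivalent extremal statement: every $2$-connected $C_{\ge k}$-free graph $G$ on $n$ vertices has $e(G)\le \max\{f_2(n,k,2),f_2(n,k,t)\}$, with equality precisely when $G\in\{H_{n,k,2},H_{n,k,t}\}$; the theorem as stated follows at once. A preliminary observation organizes the whole argument: for fixed $n,k$ the quantity $f_2(n,k,a)=\binom{k-a}{2}+a(n-k+a)$ is a strictly convex function of $a$, so on the range $2\le a\le t$ it is maximized only at the endpoints $a=2$ and $a=t$. Thus it is enough to show that an edge-maximal example is isomorphic to some $H_{n,k,a}$ with $2\le a\le t$; convexity then isolates the two graphs in the statement. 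The two facts I would import are Dirac's bound that a $2$-connected graph $H$ has a cycle of length at least $\min\{|V(H)|,2\delta(H)\}$, and its immediate consequence that, because $2(t+1)\ge k$, any $2$-connected graph on at least $k$ vertices with $\delta\ge t+1$ contains a cycle of length at least $k$.

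The engine of the upper bound is the disintegration process: repeatedly delete a vertex of current degree at most $t$ until a (possibly empty) \emph{core} $H$ with $\delta(H)\ge t+1$ remains. Because each deletion destroys at most $t$ edges, and because the last vertices deleted span at most a clique, one gets the clean accounting $e(G)\le e(H)+t\,(n-|V(H)|)$, sharpened to $e(G)\le tn-\binom{t+1}{2}$ when $H$ is empty. I would then split into two cases according to whether the core is empty.

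If the core is empty, then $e(G)\le tn-\binom{t+1}{2}$, which equals $f_2(n,k,t)$ exactly when $k$ is odd; pushing the inequality to equality and reading off the forced order of deletions---using that $2$-connectivity of $G$ keeps every deleted degree at least $2$---reconstructs $H_{n,k,t}$. If the core $H$ is nonempty it has $\delta(H)\ge t+1$, and here the $2$-connectivity of $G$ is brought in: were $|V(H)|\ge k$, passing to a suitable $2$-connected fragment of $H$ (re-attached through the rest of $G$ by $2$-connectivity) would, via the Dirac consequence, produce a cycle of length at least $k$, a contradiction. Hence $|V(H)|\le k-1$. Edge-maximality then forces $H$ to be complete, and a rotation-extension analysis of how the deleted vertices attach---no deleted vertex may send edges to more than $a$ core vertices without closing a cycle of length at least $k$---collapses $G$ to $H_{n,k,a}$ for some $2\le a\le t$, with $e(G)=f_2(n,k,a)$; note this is where $H_{n,k,2}$ (core $K_{k-2}$) and, for even $k$, $H_{n,k,t}$ (core $K_{k-t}$) both arise.

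I expect the genuine difficulty to lie entirely in the nonempty-core case: first, controlling the connectivity of the core so that the Dirac consequence can legitimately be applied (the core of a $2$-connected graph need not itself be $2$-connected, so one must argue through blocks or re-route within $G$), and second, upgrading ``the core is a small clique'' to the exact extremal shape $H_{n,k,a}$, which requires the careful rotation-extension bookkeeping that bounds the attachment of each exterior vertex and pins down the common $a$-set. The even-$k$ case is the most delicate, since there the extremal graph $H_{n,k,t}$ survives disintegration as the clique $K_{k-t}$ rather than vanishing, so the empty-core shortcut is unavailable and the structural analysis must do all the work.
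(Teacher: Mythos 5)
First, a caveat on the comparison itself: the paper does not prove this statement. Theorem \ref{thm: ex(n,Cgeq k)} is imported verbatim from Kopylov and Woodall, so there is no in-paper proof to measure your attempt against; I can only judge it against the known arguments. Your outline does follow the classical route (the $t$-disintegration process, the convexity of $f_2(n,k,a)$ in $a$, and Dirac/Erd\H{o}s--Gallai degree conditions), which is essentially the strategy of Kopylov's proof and of its modern descendants such as Luo's Theorem \ref{thm: Luo eX(n,K_r,Ck)}.

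However, the proposal has genuine gaps precisely where you locate the difficulty, and they are not deferred bookkeeping --- they are the content of the theorem. (a) The accounting $e(G)\le e(H)+t\,(n-|V(H)|)\le \binom{h}{2}+t(n-h)$ with $h=|V(H)|$ does not yield the stated bound: for odd $k=2t+1$ and $h=t+2$ the right-hand side equals $\binom{t+1}{2}+t(n-t-1)+1=f_2(n,k,t)+1$, which exceeds $\max\{f_2(n,k,2),f_2(n,k,t)\}$ for large $n$. So ``the core is a clique on at most $k-1$ vertices'' plus disintegration is insufficient; one must show that exterior vertices cannot all attach to $t$ core vertices when the core is that large (an exterior vertex meeting $m\ge 2$ vertices of a clique core $K_h$ already closes a cycle of length $h+1$, and several such vertices lengthen it further), and this refined attachment analysis is exactly the step you leave as ``rotation-extension bookkeeping.'' (b) The step ``were $|V(H)|\ge k$, pass to a suitable $2$-connected fragment of $H$'' is unjustified: the core of a $2$-connected graph may be disconnected, and even its leaf blocks can have fewer than $2(t+1)$ vertices, so Dirac's theorem need not apply to any of them; the classical repair is Kopylov's path lemma, which bounds the circumference of a $2$-connected graph from below by the sum of the end-degrees of a path --- a different tool from the one you import, and it must be applied in $G$, not in $H$. (c) The equality analysis in the empty-core case (``reading off the forced order of deletions reconstructs $H_{n,k,t}$'') is asserted rather than carried out, and for even $k$ that case does not even attain the bound, as you note. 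As a road map the proposal is faithful to the literature; as a proof it is incomplete.
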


\begin{theorem}[Fan et al \cite{fan2004cycles}]\label{thm: ex(n,c) with k leq n-1}
    Let $2\leq k\leq n-1$ and let $t=\lf\frac{k-1}{2}\rf$. If $G$ is a 2-connected $n$-vertices graph with
    $$e(G)\geq \max\{f_2(n,k,2),f_2(n,k,t)\},$$
    then either $G$ has a cycle of length at least $k$, or $G=H_{n,k,2}$, or $G=H_{n,k,t}.$
\end{theorem}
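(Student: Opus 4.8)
The plan is to read off Theorem~\ref{thm: ex(n,c) with k leq n-1} from Theorem~\ref{thm: ex(n,Cgeq k)}, the two statements being identical except for the admissible range of $k$: here we allow $2\le k\le n-1$, whereas Theorem~\ref{thm: ex(n,Cgeq k)} requires $5\le k\le n$. The natural first move is to split on the size of $k$. For $5\le k\le n-1$ the hypotheses of Theorem~\ref{thm: ex(n,Cgeq k)} hold verbatim, because $5\le k\le n-1\le n$, so the identical trichotomy (a cycle of length at least $k$, or $G=H_{n,k,2}$, or $G=H_{n,k,t}$) follows at once. Hence everything genuinely new is confined to the three residual values $k\in\{2,3,4\}$.

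For these small cases I would argue that the \emph{first} alternative always holds, so that the edge hypothesis is never consumed and the configurations $H_{n,k,2}$, $H_{n,k,t}$ never need to be reached. The input is the classical circumference estimate for $2$-connected graphs: such a $G$ on $n$ vertices contains a cycle of length at least $\min\{n,2\delta(G)\}$, and $2$-connectivity forces $\delta(G)\ge 2$, so the circumference is at least $\min\{n,4\}$. Since $k\le n-1$ gives $n\ge k+1$, for $k=2$ and $k=3$ any cycle of $G$ already has length at least $k$ (and a $2$-connected graph certainly has a cycle), while for $k=4$ we have $n\ge 5$ and the circumference is at least $\min\{n,4\}=4=k$. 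In every instance $G$ possesses a cycle of length at least $k$, so the implication is vacuously satisfied.

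I do not expect a real obstacle once Theorem~\ref{thm: ex(n,Cgeq k)} is granted; the only point requiring care is the circumference estimate in the boundary instances, and in particular checking that $n\ge k+1$ is exactly what makes $\min\{n,4\}\ge k$ for $k\le 4$. For completeness I would also note that for $k\le 4$ one has $t=\lf\frac{k-1}{2}\rf\le 1<2$, so the template graphs $H_{n,k,2}$ and $H_{n,k,t}$ fall outside the defining range $2\le a\le(k-1)/2$ and are in fact not $C_{\ge k}$-free (for example $H_{n,4,2}$ contains a $4$-cycle); this is harmless precisely because in these cases the first alternative of the conclusion always applies, so the second and third alternatives are never invoked.
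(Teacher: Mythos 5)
This statement is quoted from Fan, Lv and Wang \cite{fan2004cycles}; the paper contains no proof of it, so there is no internal argument to compare against. Judged on its own terms, your derivation is correct and complete as a verification of the statement \emph{given} Theorem~\ref{thm: ex(n,Cgeq k)}: for $5\le k\le n-1$ the hypotheses of Theorem~\ref{thm: ex(n,Cgeq k)} are satisfied verbatim and the identical conclusion transfers, while for $k\in\{2,3,4\}$ you correctly observe that the first alternative holds unconditionally -- a $2$-connected graph on $n\ge 3$ vertices contains a cycle (length $\ge 3\ge k$ for $k\le 3$), and for $k=4$ the bound $n\ge k+1=5$ together with Dirac's circumference theorem (Theorem~\ref{thm: degree geq k, vertices geq 2k} with degree bound $2$) yields a cycle of length at least $\min\{n,4\}=4$. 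Your closing remark that $H_{n,k,2}$ and $H_{n,k,t}$ are degenerate or not even $C_{\ge k}$-free when $k\le 4$ is also accurate and correctly identified as harmless. The one caveat worth recording is that this reduction does not reconstruct what \cite{fan2004cycles} actually proves: their theorem is an independent strengthening whose content is not exhausted by concatenating Kopylov--Woodall with the trivial small-$k$ cases, so your argument certifies the statement as used in this paper rather than reproving the cited result from scratch. Since the paper only ever invokes the statement itself (via Corollary~\ref{cor: excm when G is 2-connected}), that is entirely sufficient here.
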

 Since the matching number of $H_{n,k,a}$ is $\lf\frac{k}{2}\rf$, by Theorem \ref{thm: ex(n,c) with k leq n-1}, we have the following corollary.
\begin{cor}\label{cor: excm when G is 2-connected}
    Let $2\leq k\leq n-1$, $s\geq \lc \frac{k}{2}\rc$ and let $t=\lf\frac{k-1}{2}\rf$. If $G$ is a $2$-connected $n$-vertex graph with matching number at most $s$ and without cycles of length at least $k$, then
    $$e(G)\leq \max\{f_2(n,k,2),f_2(n,k,t)\}.$$The extremal graph is either $H_{n,k,2}$ or $H_{n,k,t}$.
\end{cor}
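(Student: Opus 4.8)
The plan is to obtain the corollary as a direct consequence of Theorem~\ref{thm: ex(n,c) with k leq n-1}, so that essentially all the content is inherited and the only new work is bookkeeping around the matching-number hypothesis. First I would record the two facts stated in the paragraph preceding the corollary: $e(H_{n,k,a})=f_2(n,k,a)$ for each admissible $a$, and the matching number of $H_{n,k,a}$ equals $\lf\frac{k}{2}\rf$. Since the hypothesis gives $s\ge\lc\frac{k}{2}\rc\ge\lf\frac{k}{2}\rf$, both candidate graphs $H_{n,k,2}$ and $H_{n,k,t}$ have matching number at most $s$, hence they lie inside the class of graphs we are optimizing over. This compatibility is the \emph{only} place the assumption $s\ge\lc\frac{k}{2}\rc$ is needed, and it guarantees that the matching constraint does not exclude the extremal configurations.

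For the upper bound I would argue by contradiction. Suppose $G$ is a $2$-connected $n$-vertex graph with matching number at most $s$ and no cycle of length at least $k$, yet $e(G)>\max\{f_2(n,k,2),f_2(n,k,t)\}$. Then in particular $e(G)\ge\max\{f_2(n,k,2),f_2(n,k,t)\}$, so Theorem~\ref{thm: ex(n,c) with k leq n-1} applies and yields one of three alternatives. The alternative that $G$ contains a cycle of length at least $k$ is excluded by hypothesis, so $G=H_{n,k,2}$ or $G=H_{n,k,t}$. But then $e(G)=f_2(n,k,2)$ or $e(G)=f_2(n,k,t)$, each at most $\max\{f_2(n,k,2),f_2(n,k,t)\}$, contradicting the strict inequality. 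This proves $e(G)\le\max\{f_2(n,k,2),f_2(n,k,t)\}$ for every such $G$.

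Finally, to identify the extremal graphs I would treat the equality case in the same way. If $G$ satisfies $e(G)=\max\{f_2(n,k,2),f_2(n,k,t)\}$, then $e(G)\ge\max\{f_2(n,k,2),f_2(n,k,t)\}$, and invoking Theorem~\ref{thm: ex(n,c) with k leq n-1} once more (with the long-cycle alternative excluded) forces $G\in\{H_{n,k,2},H_{n,k,t}\}$; by the first paragraph both are admissible, so the maximizer is precisely whichever of these two graphs realizes the larger of $f_2(n,k,2)$ and $f_2(n,k,t)$. I do not expect a genuine obstacle here, since the whole argument is carried by Theorem~\ref{thm: ex(n,c) with k leq n-1}; the only subtlety worth stating explicitly is that the matching-number constraint neither lowers the extremal value (Theorem~\ref{thm: ex(n,c) with k leq n-1} already bounds all $2$-connected graphs without long cycles) nor removes the extremal graphs (by the matching-number computation $\lf\frac{k}{2}\rf\le s$).
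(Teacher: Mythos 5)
Your proposal is correct and matches the paper's own (one-line) derivation: the paper likewise obtains the corollary directly from Theorem~\ref{thm: ex(n,c) with k leq n-1} after noting that $\nu(H_{n,k,a})=\lf\frac{k}{2}\rf\le s$, so the matching constraint neither strengthens the bound nor excludes the extremal graphs. Your contradiction/equality-case bookkeeping is just a more explicit write-up of the same argument.
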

Luo \cite{luo2018maximum} extended the result of the Tur\'an problem on the cycle of length at least $k$ to the general Tur\'an problem. Let $N_r(G)$ denote the number of copies of cliques with size $r$ in $G$.
\begin{theorem}[Luo \cite{luo2018maximum}]\label{thm: 2-connected without Ck, count Kr}\label{thm: Luo eX(n,K_r,Ck)}

Let $n\geq k\geq 5$ and let $t = \lf \frac{k-1}{2}\rf$. If $G$ is a 2-connected $n$-vertex graph with circumference less than $k$, then
$$N_r(G)\leq \max\{f_r(n,k,2),f_r(n,k,t)\}.$$
The extremal graph is either $H_{n,k,2}$ or $H_{n,k,t}$.
\end{theorem}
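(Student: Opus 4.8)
The plan is to adapt Kopylov's disintegration method, which underlies Theorem~\ref{thm: ex(n,c) with k leq n-1}, from counting edges to counting copies of $K_r$. The basic bookkeeping tool is the identity that, for any vertex $v$, the number of copies of $K_r$ containing $v$ equals $N_{r-1}(G[N(v)])$, the number of $(r-1)$-cliques in its neighbourhood; in particular a vertex of degree $d$ lies in at most $\binom{d}{r-1}$ copies of $K_r$. I would then run the following peeling process: repeatedly delete a vertex whose current degree is at most $t$, charging each copy of $K_r$ to the first of its vertices that is deleted. A vertex deleted at current degree $d\le t$ is charged at most $\binom{t}{r-1}$ copies, so if the process terminates at a subgraph (core) $R$ then
$$N_r(G)\le (n-|R|)\binom{t}{r-1}+N_r(R).$$

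The heart of the argument is to control the core $R$, which by construction is empty or has minimum degree at least $t+1$. Here I would invoke Kopylov's structural lemma \cite{kopylov1977maximal}: since $2(t+1)\ge k$, a $2$-connected graph on at least $k$ vertices with minimum degree at least $t+1$ contains a cycle of length at least $\min\{2(t+1),|R|\}\ge k$, contradicting the hypothesis. Hence the core is either empty or a complete/small subgraph on fewer than $k$ vertices, and tracking at which ``level'' the core is exposed yields a bound of the form $N_r(G)\le f_r(n,k,a)$ for some integer $a$ with $2\le a\le t$: the clique-term $\binom{k-a}{r}$ comes from the cliques internal to the exposed core, and the linear term $(n-k+a)\binom{a}{r-1}$ from the peeled vertices. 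The delicate point is that one cannot simply charge every peeled vertex the full $\binom{t}{r-1}$: doing so gives only the crude bound $n\binom{t}{r-1}$, which already exceeds $f_r(n,k,t)$. Instead, once the remaining graph is small one must bound all of its internal cliques together by $\binom{|R|}{r}$ and halt the peeling at the correct size, which is exactly where the parity of $k$ and the precise form of Kopylov's lemma enter.

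Having obtained $N_r(G)\le f_r(n,k,a)$ for some $2\le a\le t$, the final step is purely computational: I would show that $a\mapsto f_r(n,k,a)$ is convex on $\{2,\dots,t\}$. Using $a\binom{a}{r-1}=r\binom{a}{r}+(r-1)\binom{a}{r-1}$, one can write $f_r(n,k,a)$ as a nonnegative combination of $\binom{k-a}{r}$, $\binom{a}{r}$ and $\binom{a}{r-1}$, each of which is convex in $a$ by Pascal's rule (its discrete second difference is again a binomial coefficient, hence nonnegative). A convex function on an interval attains its maximum at an endpoint, so $N_r(G)\le\max\{f_r(n,k,2),f_r(n,k,t)\}$; chasing the equality conditions back through the charging scheme forces the core to be a clique and every peeled vertex to meet it in exactly $a$ vertices, i.e.\ $G=H_{n,k,2}$ or $G=H_{n,k,t}$.

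I expect the main obstacle to be the core analysis together with its clique bookkeeping: deletion can destroy $2$-connectivity, so one must either peel within blocks or use Kopylov's lemma in a form that already accounts for this, and, in the borderline case where the core is empty or nearly complete, perform the refined count that stops the peeling at the right size so as to recover $\binom{k-a}{r}$ rather than the lossy $n\binom{t}{r-1}$. This is precisely the step where the two candidate extremal graphs $H_{n,k,2}$ and $H_{n,k,t}$ separate according to the parity of $k$, and where I would lean most heavily on the machinery behind Theorem~\ref{thm: ex(n,c) with k leq n-1}.
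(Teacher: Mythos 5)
First, a remark on the comparison itself: the paper does not prove this statement — it is quoted verbatim from Luo \cite{luo2018maximum} — so the only thing to compare against is Luo's published argument. That argument is indeed Kopylov's disintegration method adapted from edges to cliques, and your skeleton matches it well: the charge of each copy of $K_r$ to its first deleted vertex (at most $\binom{t}{r-1}$ per peeled vertex), the reduction to a core of minimum degree at least $t+1$, and the final convexity of $a\mapsto f_r(n,k,a)$ on $\{2,\dots,t\}$ (your identity $a\binom{a}{r-1}=r\binom{a}{r}+(r-1)\binom{a}{r-1}$ is correct and that step is sound) are all genuinely present in the real proof.

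There are, however, two concrete gaps in the middle, which is where the content of the theorem lives. (a) You control the core $R$ by applying a Dirac-type statement to it, but $R$ need not be $2$-connected and, more fatally, it is too small for the hypothesis to ever hold: in the extremal graph $H_{n,k,t}$ the $t$-core is $K_{k-t}$, which has only about $k/2<k$ vertices, so a lemma requiring ``at least $k$ vertices'' returns nothing. Luo (following Kopylov and F\"uredi--Kostochka--Verstra\"ete) instead passes to an edge-maximal, i.e.\ $C_{\ge k}$-saturated, counterexample, so that any two non-adjacent vertices of the core are joined in $G$ by a path with at least $k-1$ edges, and then applies Kopylov's path lemma to get a cycle of length at least $\min\{k,\,2(t+1)\}=k$ unless the core is a clique; the saturation step is absent from your plan and is precisely what forces $R$ to be complete. (b) The derivation of $N_r(G)\le f_r(n,k,a)$ is asserted rather than obtained. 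A single peeling at level $t$ gives $N_r(G)\le (n-|R|)\binom{t}{r-1}+\binom{|R|}{r}$, which in the regime $|R|=k-2$ (the $H_{n,k,2}$ case) far exceeds $\max\{f_r(n,k,2),f_r(n,k,t)\}$, because every peeled vertex is overcharged $\binom{t}{r-1}$ instead of $\binom{2}{r-1}$. You notice this difficulty, but the fix you propose — bounding the core's cliques by $\binom{|R|}{r}$ and stopping the peeling earlier — does not lower the per-vertex charge. The missing idea is a second disintegration at level $a:=k-|R|$: one shows that the $a$-core of $G$ is exactly the clique $K_{k-a}$ (any surviving outside vertex would create a cycle of length at least $k$), so every vertex outside the core contributes at most $\binom{a}{r-1}$ copies of $K_r$, giving $N_r(G)\le\binom{k-a}{r}+(n-k+a)\binom{a}{r-1}=f_r(n,k,a)$ with $a\ge 2$ forced by $2$-connectivity; only then does your convexity argument close the proof. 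The extremal-graph characterization requires chasing equality through both disintegrations, which is additional (if routine) work.
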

 Similarly, we have the following result.
\begin{cor}\label{cor: ex(n,K_r) for 2-connected graph}
    Let $ k\geq 5$, $s\ge \lceil\frac{k}{2}\rceil$ and $t = \lf\frac{k-1}{2}\rf$. If $G$ is a 2-connected $n$-vertex graph with matching number at most $s$ and without cycles of length at least $k$, then
    $N_r(G)\leq f_r(n,k,t)$ when $n$ is  large enough. Moreover, the extremal graph is  $H_{n,k,t}$.
\end{cor}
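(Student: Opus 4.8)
The plan is to read the statement off from Luo's theorem (Theorem~\ref{thm: Luo eX(n,K_r,Ck)}) and then compare the two candidate values $f_r(n,k,2)$ and $f_r(n,k,t)$ as functions of $n$. Since $G$ is a $2$-connected $n$-vertex graph with no cycle of length at least $k$ (equivalently, circumference less than $k$), Theorem~\ref{thm: Luo eX(n,K_r,Ck)} applies directly and gives
$$N_r(G)\le \max\{f_r(n,k,2),f_r(n,k,t)\},$$
with equality only for $G=H_{n,k,2}$ or $G=H_{n,k,t}$. Note that the matching-number hypothesis plays no role in this upper bound; its only purpose is to certify that the graph we output as extremal is itself admissible, which I check at the end.

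The first step is to show that $f_r(n,k,t)\ge f_r(n,k,2)$ once $n$ is large. Writing $f_r(n,k,a)=\binom{k-a}{r}+(n-k+a)\binom{a}{r-1}$, the coefficient of $n$ is exactly $\binom{a}{r-1}$, so for large $n$ the comparison is governed by $\binom{t}{r-1}$ versus $\binom{2}{r-1}$. Because $k\ge 5$ forces $t=\lf\frac{k-1}{2}\rf\ge 2$ and $\binom{a}{r-1}$ is nondecreasing in $a$, we always have $\binom{t}{r-1}\ge\binom{2}{r-1}$. When $t\ge 3$ this inequality is strict for every $r$ with $2\le r\le t+1$ (i.e.\ exactly the range in which $H_{n,k,t}$ contains copies of $K_r$ through its apex set, so that $f_r(n,k,t)$ is the meaningful bound): for $r\le 3$ strictness follows from $t\ge 3$, and for $4\le r\le t+1$ one has $\binom{2}{r-1}=0<\binom{t}{r-1}$. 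Hence there is a threshold $n_0=n_0(k,r)$ with $f_r(n,k,t)>f_r(n,k,2)$ for all $n\ge n_0$. In the degenerate case $t=2$ (that is, $k\in\{5,6\}$) the graphs $H_{n,k,2}$ and $H_{n,k,t}$ coincide and the two values are equal. Either way, for $n\ge n_0$ the maximum in Theorem~\ref{thm: Luo eX(n,K_r,Ck)} equals $f_r(n,k,t)$ and the unique extremal graph is $H_{n,k,t}$.

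The second step is to confirm that $H_{n,k,t}$ is a legitimate competitor for the corollary's class, so that the bound is attained. As recorded after Theorem~\ref{thm: ex(n,Cgeq k)}, $H_{n,k,t}$ is $2$-connected and has no cycle of length at least $k$, and its matching number equals $\lf\frac{k}{2}\rf\le\lc\frac{k}{2}\rc\le s$, hence at most $s$. Combining the two steps yields $N_r(G)\le f_r(n,k,t)$ for every admissible $G$ once $n\ge n_0$, with $H_{n,k,t}$ achieving equality.

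The main obstacle is making the leading-coefficient comparison decisive and pinning down $n_0$. For the small values of $r$ relevant here the strict inequality $\binom{t}{r-1}>\binom{2}{r-1}$ is immediate, but one must be careful outside the range $2\le r\le t+1$: when $r\ge t+2$ both leading coefficients vanish and the comparison reverts to the constant terms $\binom{k-t}{r}$ and $\binom{k-2}{r}$, where $H_{n,k,t}$ need no longer dominate, so the statement is to be understood for $r$ in the range where $f_r(n,k,t)$ is the leading term. Once the leading behaviour is fixed, extracting an explicit threshold $n_0(k,r)$ is a routine linear estimate.
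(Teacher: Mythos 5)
Your proposal is correct and takes essentially the same route the paper intends: the corollary is stated with no explicit proof ("Similarly, we have the following result"), being an immediate consequence of Theorem \ref{thm: Luo eX(n,K_r,Ck)} together with the observation that for large $n$ the term $f_r(n,k,t)$ dominates $f_r(n,k,2)$ because $\binom{t}{r-1}\ge\binom{2}{r-1}$, plus the check that $\nu(H_{n,k,t})=\lf k/2\rf\le s$. Your caveat about the regime $r\ge t+2$ is well taken and consistent with the paper, which only ever invokes the corollary under the standing assumption $a_0\ge r$ (i.e.\ $r\le t$) from Section \ref{sec: pre}.
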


Erd\H{o}s and Gallai \cite{erdos1961minimal} gave the result of the maximum graph with the matching number at most $s$, that is,
\begin{equation}
    ex(n,M_{s+1})=\max\left\{f_2(n,2s+1,s),{2s+1\choose 2}\right\}.
\end{equation}
Duan, Ning, Peng, Wang and Yang \cite{duan2020maximizing} extent Erd\H{o}s and Gallai's result.
\begin{theorem}[Duan et al.  \cite{duan2020maximizing}]\label{thm: duan thm}
    If $G$ is a graph with $n\geq 2s+2$ vertices, minimum degree $\delta$, and with matching number at most $s,$ then
    $$N_r(G)\leq\max \{f_r(n,2s+1,\delta),f_r(n,2s+1,s)\},$$
    i.e. $ex(n,K_r,M_{s+1}) =\max \{f_r(n,2s+1,\delta),f_r(n,2s+1,s)\}. $
\end{theorem}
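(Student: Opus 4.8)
The lower bound is exhibited by the two graphs $H_{n,2s+1,\delta}$ and $H_{n,2s+1,s}$: each has matching number exactly $s$ (hence is $M_{s+1}$-free), minimum degree $\delta$ and $s$ respectively, and $N_r\bigl(H_{n,2s+1,a}\bigr)=f_r(n,2s+1,a)$ directly from the construction, so the two branches of the maximum are both realized. The content of the theorem is therefore the upper bound, which I would prove by induction on $n$ (with $r$ and $s$ fixed) by deleting a vertex of minimum degree. Before the induction I would record one preliminary fact: if $\nu(G)\le s$ and $n\ge 2s+2$, then $\delta(G)\le s$. This follows from the classical inequality $\nu(G)\ge\min\{\delta(G),\lfloor n/2\rfloor\}$ (with $n\ge 2s+2$ a minimum matching leaves two unmatched vertices, and each having more than $\nu(G)$ neighbours, all matched, produces an augmenting path). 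This confines the relevant range to $0\le\delta\le s$.

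For the inductive step let $v$ be a vertex with $\deg(v)=\delta$ and split copies of $K_r$ according to whether they contain $v$:
\[
N_r(G)=N_r(G-v)+N_{r-1}\bigl(G[N(v)]\bigr)\le N_r(G-v)+\binom{\delta}{r-1}.
\]
The graph $G-v$ has $n-1$ vertices and matching number at most $s$, with minimum degree $\delta'\ge\delta-1$, so by induction $N_r(G-v)\le\max\{f_r(n-1,2s+1,\delta'),f_r(n-1,2s+1,s)\}$. The key arithmetic facts are the one-vertex recurrence $f_r(m,2s+1,a)=f_r(m-1,2s+1,a)+\binom{a}{r-1}$ and the monotonicity $f_r(m,2s+1,a+1)\ge f_r(m,2s+1,a)$, valid once $m$ is large because the coefficient of $m$ in $f_r(m,2s+1,a)$ is $\binom{a}{r-1}$, which is nondecreasing in $a$. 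Using $\delta\le s$ (hence $\binom{\delta}{r-1}\le\binom{s}{r-1}$), each branch then closes: if the maximum for $G-v$ is the $s$-branch I get $N_r(G)\le f_r(n-1,2s+1,s)+\binom{\delta}{r-1}\le f_r(n,2s+1,s)$; if it is a $\delta'$-branch with $\delta'\le\delta$, monotonicity gives $N_r(G)\le f_r(n-1,2s+1,\delta)+\binom{\delta}{r-1}=f_r(n,2s+1,\delta)$; and the only way a $\delta'$-branch with $\delta'>\delta$ can be the maximum (for large $n$) is $\delta'=s$, forcing $\delta\le s-1$ and again $N_r(G)\le f_r(n-1,2s+1,s)+\binom{s}{r-1}=f_r(n,2s+1,s)$.

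The hard part is precisely where this monotonicity breaks down, namely for $n$ close to $2s+2$: there $f_r(n,2s+1,a)$ is \emph{not} monotone in $a$, the two branches $f_r(n,2s+1,\delta)$ and $f_r(n,2s+1,s)$ genuinely compete (this is the clique-versus-$H_{n,2s+1,s}$ dichotomy already present in the Erd\H{o}s--Gallai edge bound), and the crude comparison above can fail; moreover the inductive reduction exits the range $n\ge 2s+2$ at $n=2s+2$, so that value must be treated as a base case. For this regime I would argue directly from a maximum matching $M$ with unmatched independent set $U$: since every copy of $K_r$ meets $U$ in at most one vertex,
\[
N_r(G)=N_r\bigl(G[V(M)]\bigr)+\sum_{u\in U}N_{r-1}\bigl(G[N(u)]\bigr),
\]
and the augmenting-path constraints on $M$ (no two vertices of $U$ may send edges to the two distinct endpoints of a common matching edge) restrict the neighbourhoods $N(u)\subseteq V(M)$ strongly enough to control the second sum. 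Converting these constraints into a sharp bound on $\sum_{u\in U}N_{r-1}\bigl(G[N(u)]\bigr)$, and balancing it against the $\binom{2s+1-\delta}{r}$ coming from the core clique, is the main obstacle; I expect it to require a convexity/weighting argument over the matching edges in the spirit of Luo's clique-counting refinement of Theorem~\ref{thm: ex(n,Cgeq k)}, together with the Gallai--Edmonds description of how $U$, $V(M)$ and the factor-critical components interact.
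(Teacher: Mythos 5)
This statement is quoted in the paper from Duan, Ning, Peng, Wang and Yang \cite{duan2020maximizing}; the paper itself contains no proof of it, so there is nothing internal to match your argument against, and your proposal must stand on its own. Judged that way, it is incomplete. Your lower bound is fine: $H_{n,2s+1,\delta}$ and $H_{n,2s+1,s}$ do realize the two branches. Your inductive step, however, only closes in the regime where $f_r(m,2s+1,a)$ is monotone in $a$, i.e.\ for $m$ large relative to $s$; but in that regime $\binom{s}{r-1}\ge\binom{\delta}{r-1}$ forces the maximum to be the $s$-branch anyway, so the induction proves only the asymptotic statement $N_r(G)\le f_r(n,2s+1,s)$, which is the easy part. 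The entire content of the theorem as stated --- validity for \emph{every} $n\ge 2s+2$, with the $\delta$-branch genuinely competing --- lives exactly where you admit the argument breaks: near $n=2s+2$, where the induction exits its range and monotonicity fails. For that regime you offer a plan (the decomposition $N_r(G)=N_r(G[V(M)])+\sum_{u\in U}N_{r-1}(G[N(u)])$, augmenting-path constraints, a convexity/weighting argument, Gallai--Edmonds), but no executed bound; since that is precisely the hard core of Duan et al.'s result, the proposal is a correct outline of the easy half plus an unproven sketch of the hard half, not a proof.

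Two smaller points. First, your justification of the preliminary fact $\delta(G)\le s$ is gappy: if two unmatched vertices $u,u'$ are each adjacent to both endpoints of a common matching edge $ab$, then $u\,a\,b\,u'$ is augmenting, but if they attach to \emph{different} matching edges no augmenting path arises immediately, so the parenthetical argument as written does not go through. The fact itself is true and follows cleanly from the Berge--Tutte condition (Lemma \ref{thm: TB theorem}): with $|X|+\sum_i\lfloor|V(C_i)|/2\rfloor\le s$ and $n\ge 2s+2$, some vertex lies in a component $C_i$ with $|C_i|-1+|X|\le s$. Second, in the inductive step you should note that $\delta'\le s$ also needs $n-1\ge 2s+2$, which again pushes the burden onto the base case $n=2s+2$ that you have not treated.
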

Chakraborti and Chen \cite{chakraborti2020exact} gave the exact results about $ex(n,K_r,C_{\geq k})$. Dou, Ning and Peng \cite{dou2024number} determined the value of $ex(n,\{C_{\geq k},K_m\})$.

Recently, Alon and Frankl \cite{alon2024turan} proposed to consider the maximum number of edges in a $F$-free graph on $n$ vertices with matching number at most $s$ (denoted as $ex({n,\{F,M_{s+1}\}}$). In the same paper, they proved that
$$ex(n,\{K_{k+1},M_{s+1}\})=e(G(n,k,s)),$$
where $G(n,k,s)$ denotes the complete $k$-partite graph on $n$ vertices consisting of $k-1 $ vertex classes of sizes as equal as possible whose total size is $s$, and one additional vertex class of size $n-s$.
Later, Gerbner \cite{gerbner2024turan} gave several results about $ex({n,\{F,M_{s+1}\}})$,  when $F$ satisfies some properties. He proved that if $\chi(F)>2$ and $n$ is large enough, then $ex(n,\{F,M_{s+1}\})=ex(s,\mathscr{G}(F))+s(n-2)$, where $\mathscr{G}(F)$ is the family of graphs obtained by deleting an independent set from $F$. Luo, Zhao and Lu \cite{luo2024turannumbercompletebipartite} determined the exact value of $ex(n,\{K_{l,t},M_{s+1}\})$ when $s$ is large enough and $n\geq {3s\choose 2}$ for all $3\leq l\leq t$.
Ma and Hou \cite{ma2023generalized} determined the exact value of $ex(n,K_r,\{K_{k+1},M_{s+1}\})$ when $n\geq 2s+1$ and $k\geq r\geq 3$. Zhu and Chen \cite{zhu2023extremal} determined $ex(n,K_r,\{F,M_{s+1}\})$ for some fixed $F$. Gerbner \cite{gerbner2023generalized} gave several asymptomatic results about $ex(n,H,\{F,M_{s+1}\})$ for general graph $H$.

Let $C_k$ be a cycle of order $k$. Denote $C_{\ge k}=\{C_k,C_{k+1},\ldots\}$. In this paper, we determined the exact value of $ex(n,K_r, \{C_{\ge k},M_{s+1}\})$ for large enough $n$. Note that the matching number of $C_{k}$ is $\lfloor k/2\rfloor$. If $s\leq \lfloor\frac{k}{2}\rfloor-1$, then $ex(n,K_r, \{C_{\ge k},M_{s+1}\})=ex(n,K_r,M_{s+1})$ which has been determined.
So we consider the case when $s\geq \lc\frac{k}{2}\rc$. It is not difficult to show that $ex(n,K_2,\{C_{\geq 3},M_{s+1}\}) = n-1 $ and $K_{1,n-1}$ is the extremal graph. And $ex(n,K_r,\{C_{\geq 3},M_{s+1}\})=0$, for $r\geq 3$. So we can assume $k\ge 4$. Since the parity of $k$ causes different value of $ex(n,K_r, \{C_{\ge k},M_{s+1}\})$, we will describe our conclusions in two theorems.
Given integers $k$ and $r$, let $\tau_{k,r}=\min\{k_0\in \mathbb{N}~|~k_0{k\choose r-1}< {k_0+1\choose r}\}.$ Notice that ${a+b+1\choose r}\geq {a+1\choose r}+b{a\choose r-1}$ for every $b\geq 0$, $a\geq 1$, and $k_0{k\choose r-1}\geq k_0{k_0\choose r-1} \geq {k_0+1\choose r}$ for every $2\leq k_0\leq k$. So $\tau_{r,k}> k$. And we have $$(\tau_{k,r}+b){k\choose r-1}\leq \tau_{k,r}{k\choose r-1}+b{\tau_{k,r}\choose r-1}< {\tau_{k,r}+1\choose r}+b{\tau_{k,r}\choose r-1} \leq {\tau_{k,r}+b+1\choose r},$$ for every $b\geq 0$. Thus for every $k'\geq \tau_{k,r}$, $k'{k\choose r-1}<{k'+1\choose r}$. Moreover, above inequality implies when $k'\geq \tau_{r,k}$,
\begin{equation}\label{eq: the convexity of k choose r}
{k'+b+1\choose r}-(k'+b){k\choose r-1}\geq {k'+1\choose r}-k'{k\choose r-1}.\end{equation}
So the function $f(x)={x+1\choose r}-x{k\choose r-1}$ is monotonic increase about $x$ when $x\geq \tau_{k,r}$.

Let
$$h(r,k,s)=\left\{
\begin{array}{ll}
{k+1\choose r}-(k+1){k\choose r-1} & \mbox{if $2k\leq\tau_{k,r},$}\\
q{2k\choose r}+{k+1\choose r}-(k+1+q(2k-1)){k\choose r-1} & \mbox{if  $2t+1<\tau_{k,r}\le 2k-1$,}\\
q{2k\choose r}+{2t+2\choose r}+{k+1\choose r}-A{k\choose r-1} & \mbox{if  $2t+1\geq \tau_{k,r}$,}
\end{array}
\right.$$
where $A=k+1+q(2k-1)+(2t+1)$, $q=\lf\frac{s-k}{k-1}\rf$ and $t=s-k-q(k-1)<k-1$. It is not difficult to check that ${2k\choose r}-(2k-1){k\choose r-1}\ge 0$. When $2t+1\geq \tau_{k,r}$, we have ${2t+2\choose r}-(2t+1){k\choose r-1}\ge 0$. So $h(r,k,s)\ge {k+1\choose r}-(k+1){k\choose r-1}$. Now we have our first main result.


\begin{theorem}\label{thm: exodd when n large enough}
    If $k+1\geq r$, $k\geq 2,r\geq 2$  and $s\geq 2k+1$, then when $n$ is large enough,
    $$\excoddm = {k\choose r-1}n+ h(r,k,s).$$Moreover, when $2k\leq\tau_{k,r}$ (resp. $2t+1<\tau_{k,r}\le 2k-1$ or $2t+1\geq \tau_{k,r}$), the extremal graph is $ H_{n,2k+1,k}$ (resp.  $St(G,B_1,s_1)$ consisting of $q+1$ blocks with $B_1\cong H_{n-q(2k-1),2k+1,k}$ and $q$ blocks being $K_{2k}$ or $St(G,B_1,s_1)$ consisting of $q+2$ blocks with $B_1\cong H_{n-q(2k-1)-(2t+1),2k+1,k}$, $q$ blocks being  $K_{2k}$ and one block being  $K_{2t+2}$), where $s = k+ q(k-1)+t$,  $0\le t\le k-2$ and the graph $St(G,B_1,s_1)$ would be defined in Section 2.

\end{theorem}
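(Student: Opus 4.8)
The plan is to prove the identity by exhibiting extremal graphs (lower bound) and then showing that no $\{C_{\ge 2k+1},M_{s+1}\}$-free graph can beat them (upper bound).

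\textbf{Lower bound.} First I would write the three candidates explicitly as ``stars of blocks'' glued at a single cut vertex $v$. In every case the central block is $B_1\cong H_{n',2k+1,k}$ (with $n'$ chosen so the total vertex count is $n$), and $v$ is taken to be one of the $k$ dominating vertices of its $K_{k+1}$-core; to $v$ one attaches $q$ pendant copies of $K_{2k}$ (regimes 2 and 3) and, in regime 3, one copy of $K_{2t+2}$, each sharing only $v$. I would check three things. First, \emph{no long cycle}: every cycle lies inside a single block and each block ($H_{n',2k+1,k}$, $K_{2k}$, or $K_{2t+2}$ with $2t+2\le 2k-2$) has circumference at most $2k$, so $G$ is $C_{\ge 2k+1}$-free. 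Second, \emph{matching number}: since $v$ is covered by every maximum matching of $B_1$ and $\nu(K_{2j}-v)=\nu(K_{2j})-1$, the per-block contributions add, giving $\nu(G)=k+q(k-1)+t=s$. Third, \emph{count}: as every copy of $K_r$ with $r\ge2$ lies in one block, $N_r(G)=N_r(B_1)+q\binom{2k}{r}$ (plus $\binom{2t+2}{r}$ in the third regime), and substituting $N_r(H_{n',2k+1,k})=\binom{k}{r-1}n'+\binom{k+1}{r}-(k+1)\binom{k}{r-1}$ yields exactly $\binom{k}{r-1}n+h(r,k,s)$.

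\textbf{Upper bound.} Let $G$ be extremal and decompose it into its blocks $B_1,\dots,B_m$. Since a clique of order at least $2$ is contained in a single block, $N_r(G)=\sum_i N_r(B_i)$. Each nontrivial block is $2$-connected with circumference at most $2k$, so Theorem~\ref{thm: Luo eX(n,K_r,Ck)} and Corollary~\ref{cor: ex(n,K_r) for 2-connected graph} give, for a block on $m_i$ vertices, $N_r(B_i)\le f_r(m_i,2k+1,k)=\binom{k}{r-1}m_i+\binom{k+1}{r}-(k+1)\binom{k}{r-1}$ once $m_i$ is large, and $N_r(B_i)\le \binom{m_i}{r}$ when $m_i\le 2k$. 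Summing the linear pieces reproduces the $\binom{k}{r-1}n$ term, reducing the problem to a finite optimization: distribute the excess vertices and the matching budget $s$ among the blocks so as to maximize the additive constant.

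\textbf{The optimization.} The key trade-off is that a clique block $K_{2j}$ uses $2j-1$ new vertices, raises the matching number by $j-1$, and contributes $\binom{2j}{r}$ copies of $K_r$, whereas the same vertices placed as dominated (leaf) vertices of the central block contribute only $(2j-1)\binom{k}{r-1}$ and cost no matching. By the definition of $\tau_{k,r}$ and the monotonicity of $f(x)=\binom{x+1}{r}-x\binom{k}{r-1}$ recorded in \eqref{eq: the convexity of k choose r}, trading leaves for a $K_{2j}$ is advantageous precisely when $2j-1\ge \tau_{k,r}$. Hence with budget $s-k$ one greedily installs $q=\lf\frac{s-k}{k-1}\rf$ blocks $K_{2k}$ (beneficial iff $\tau_{k,r}\le 2k-1$) and then a single $K_{2t+2}$ for the remainder $t$ (beneficial iff $2t+1\ge\tau_{k,r}$), otherwise the budget is left on leaves. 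These three outcomes are exactly the three cases of $h(r,k,s)$, and an exchange/convexity argument using \eqref{eq: the convexity of k choose r} shows that one central $H$-block is never beaten by splitting the vertices across several large blocks.

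\textbf{Main obstacle.} The delicate point is the matching constraint: $\nu(G)$ is \emph{not} additive over blocks, so I must establish a sharp lower bound for $\nu(G)$ in terms of the block matching numbers — through the block-cut tree, or the Gallai--Edmonds/Berge--Tutte structure, controlling which cut vertices are essential — in order to convert ``$\nu(G)\le s$'' into the clean budget inequality that drives the optimization. Tied to this is showing that ``$n$ large enough'' forces a single dominant block, so that the $f_r(\cdot,2k+1,k)$ bound (rather than $f_r(\cdot,2k+1,2)$ or a multi-block split) governs the leading term; handling the medium-sized blocks uniformly is where most of the technical care will go.
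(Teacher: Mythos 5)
Your overall architecture (lower bound by explicit star-of-blocks constructions; upper bound by block decomposition, Luo's theorem per block, then a finite optimization governed by $\tau_{k,r}$) coincides with the paper's, and your lower-bound verification and the $\tau_{k,r}$ trade-off analysis are correct. But the step you yourself flag as the ``main obstacle'' --- converting $\nu(G)\le s$ into the additive per-block budget $k+\sum_{i\ge 2}\lfloor (v(B_i)-1)/2\rfloor\le s$ --- is left unresolved, and it is genuinely the crux: without it the ``finite optimization'' is not over the right feasible set, since in a general block-cut tree the matching number can be strictly smaller than the sum of the per-block contributions, so a non-star arrangement could a priori pack more large blocks under the same matching budget. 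The paper closes this gap not by a Gallai--Edmonds analysis but by an exchange reduction: using Lemma \ref{thm: TB theorem} and Lemma \ref{lem: the main block in G} it finds a $k$-set $S_0$ in the dominant block with $\Theta(n)$ common neighbours in $I_X(G)$, picks $s_1\in S_0$, and re-roots every block at $s_1$ to form $St(G,B_1,s_1)$; Lemma \ref{lem: G=St(G)} shows this preserves $N_r$ and does not increase $\nu$ (the key point being that $s_1$ is covered by every maximum matching of $B_1$, and any matching edge $s_1u$ with $u\notin V(B_1)$ can be rerouted to a spare common neighbour of $S_0$). After this reduction the matching number \emph{is} additive and your optimization becomes legitimate.

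A second, smaller gap: your upper bound applies $f_r(m_i,2k+1,k)$ ``once $m_i$ is large'' and $\binom{m_i}{r}$ for small blocks, but you never justify that the peripheral blocks may be taken to be cliques of even order in $[k+2,2k]$, with at most one of them different from $K_{2k}$ --- facts needed to land exactly on the three cases of $h(r,k,s)$ rather than on a looser bound. The paper gets these from switching arguments that again rely on $S_0$: any vertex outside $B_1$ with $d^r(v)\le\binom{k}{r-1}$ can be switched onto $S_0$ without loss, which forces minimum degree $\ge k+1$ outside $B_1$; Dirac's theorem and Theorem \ref{thm: deg geq k but a single vertex, v leq 2k-1} then pin down the block orders and Hamiltonicity, and a two-vertex exchange (Lemma \ref{new2}) eliminates a second medium block. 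So your plan is the right one, but both of its load-bearing steps are supplied in the paper by the single device of the set $S_0$ and the star-graph normalization, which your proposal would need to reconstruct.
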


Note that $\tau_{k,2}=2k$. Thus $h(2,k,s)={k+1\choose 2}-k(k+1)$.  We have the following result by Theorem
\ref{thm: exodd when n large enough}.

\begin{cor}
    If $k\geq 2$ and $s\geq 2k+1$, then
$$ ex(n,\{C_{\geq 2k+1},M_{s+1}\})={k\choose 2}+k(n-k)$$
when $n$ is large enough and the extremal graph is $ H_{n,2k+1,k}$.
\end{cor}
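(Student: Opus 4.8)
The plan is to obtain this corollary as the $r=2$ specialization of Theorem \ref{thm: exodd when n large enough}, so essentially no new argument is needed beyond substitution and arithmetic. First I would verify the hypotheses: with $r=2$ the requirement $k+1\geq r$ becomes $k\geq 1$, which is implied by $k\geq 2$, and $s\geq 2k+1$ is assumed directly. Hence Theorem \ref{thm: exodd when n large enough} applies for all sufficiently large $n$, giving $ex(n,\{C_{\geq 2k+1},M_{s+1}\})=ex(n,K_2,\{C_{\geq 2k+1},M_{s+1}\})=\binom{k}{1}n+h(2,k,s)=kn+h(2,k,s)$.

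The only substantive point is to determine which branch of $h(r,k,s)$ is active at $r=2$. I would compute $\tau_{k,2}$ straight from its definition: the defining inequality $k_0\binom{k}{1}<\binom{k_0+1}{2}$ simplifies to $k\,k_0<\tfrac{(k_0+1)k_0}{2}$, i.e. $2k<k_0+1$, so the least integer solution is $k_0=2k$, whence $\tau_{k,2}=2k$. The crucial (and only mildly delicate) observation is that this yields the boundary equality $2k\leq\tau_{k,2}$, which places us in the \emph{first} branch of the definition of $h$ regardless of the value of $s$; in particular the parameters $q$ and $t$ attached to $s$ never enter, so the answer is genuinely uniform in $s$ once $s\geq 2k+1$.

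Consequently $h(2,k,s)=\binom{k+1}{2}-(k+1)\binom{k}{1}=\binom{k+1}{2}-(k+1)k$, and since $\binom{k+1}{2}=\tfrac{k(k+1)}{2}$ this collapses to $-\binom{k+1}{2}$. Substituting back gives $ex(n,\{C_{\geq 2k+1},M_{s+1}\})=kn-\binom{k+1}{2}=kn-\tfrac{k(k+1)}{2}$, which I would then rewrite in the stated form by expanding $\binom{k}{2}+k(n-k)=\tfrac{k(k-1)}{2}+kn-k^2=kn-\tfrac{k(k+1)}{2}$, matching exactly.

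Finally, the extremal graph requires no separate construction: because we land in the first branch $2k\leq\tau_{k,2}$, Theorem \ref{thm: exodd when n large enough} already designates $H_{n,2k+1,k}$ as the unique extremizer, and this carries over verbatim. The main obstacle is thus entirely contained in Theorem \ref{thm: exodd when n large enough} itself; for the corollary the single item deserving care is confirming that $\tau_{k,2}=2k$ triggers the first case of $h$ (rather than the middle or last case), after which the result is a one-line computation.
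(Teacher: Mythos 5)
Your proposal is correct and matches the paper's own derivation: the paper likewise notes $\tau_{k,2}=2k$, so the first branch of $h$ applies, giving $h(2,k,s)={k+1\choose 2}-k(k+1)=-{k+1\choose 2}$ and hence $kn-{k+1\choose 2}={k\choose 2}+k(n-k)$, with extremal graph $H_{n,2k+1,k}$ inherited directly from Theorem \ref{thm: exodd when n large enough}. No further comment is needed.
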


For the case forbidding cycles of length at least $2k$,  we define a function for integers $x,y$, and $z$ as following.
\begin{equation*}
    \begin{aligned}
    g(x,y,z):=& x{2k-1\choose r}+y{2k-2\choose r}+{z\choose r}+{k+1\choose r}\\
        -&(k+1+x(2k-2)+y(2k-3)+(z-1)){k-1\choose r-1}.
    \end{aligned}
\end{equation*}
Let $$\mathscr{T}_1:=\left\{(x,y,z)|x,y,z \in \mathbb{N}, 1\leq z\leq 2k-1,(k-1)x+(k-2)y+\lf\frac{z-1}{2}\rf+k\leq s\right\},$$
    $$\mathscr{T}_2:=\left\{(x,y,z)|x,y,z \in \mathbb{N}, 1\leq z\leq 2k-1,(k-1)x+(k-2)y+\lf\frac{z-1}{2}\rf+k-1\leq s\right\}.$$

\begin{theorem}\label{thm: exeven when n large enough}
    If $k\geq r$, $r\geq 2$ and $k\geq 3$, then we have
       $$ \excevenm={k-1\choose r-1}n+\max\left\{\max_{(x,y,z)\in \mathscr{T}_1} g(x,y,z), \max_{(x,y,z)\in \mathscr{T}_2}g(x,y,z)-{k-1\choose r-2}\right \}.$$
\end{theorem}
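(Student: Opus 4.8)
The plan is to reduce the problem to its 2-connected building blocks, exactly as in the odd case (Theorem \ref{thm: exodd when n large enough}), and then optimize over how the matching budget $s$ is distributed among those blocks. The overall strategy has three layers: a structural decomposition of any extremal $\{C_{\geq 2k}, M_{s+1}\}$-free graph, a count of $K_r$'s that is additive over the decomposition, and a combinatorial optimization that produces the two-term maximum appearing in the statement.

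\textbf{Step 1 (Decomposition into blocks with a dominant vertex).}
First I would argue that, for $n$ large, an extremal graph $G$ has a very restricted shape: it consists of a collection of 2-connected blocks glued at cut vertices (a block tree), where exactly one block $B_1$ is ``large'' (absorbing all but $O(1)$ of the $n$ vertices) and the remaining blocks have bounded order. The key leverage is the Erd\H{o}s--Gallai/Woodall bound together with Corollary \ref{cor: ex(n,K_r) for 2-connected graph}: a 2-connected $C_{\geq 2k}$-free graph with matching number at most $s$ has at most $f_r(n,2k,t')$ copies of $K_r$ with $t'=\lfloor (2k-1)/2\rfloor = k-1$, and the extremizer is $H_{n,2k,k-1}$. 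Because $K_r$'s cannot straddle a cut vertex in a way that increases the count across blocks, the total $N_r(G)$ splits as a sum over blocks, so the large block should look like $H_{n',2k,k-1}$ (giving the leading term ${k-1\choose r-1}n$ and the ${k+1\choose r}-(k+1){k-1\choose r-1}$ contribution from its $K_{k+1}$ core) while the small blocks are near-cliques.

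\textbf{Step 2 (Classifying the small blocks and writing $g$).}
Each bounded 2-connected $C_{\geq 2k}$-free block is itself essentially a clique of order at most $2k-1$, since a $2$-connected graph on $m$ vertices with no cycle of length $\geq 2k$ and many $K_r$'s must be dense; the relevant near-extremal orders are $2k-1$, $2k-2$, and a residual block of order $z$ with $1\le z\le 2k-1$. This is precisely why $g(x,y,z)$ records $x$ copies of $K_{2k-1}$, $y$ copies of $K_{2k-2}$, one block of order $z$, and the $K_{k+1}$ coming from the large block, with the linear correction term subtracting ${k-1\choose r-1}$ once for each vertex that is shared with (attached to) the large block. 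The matching-number constraint $\sum (\text{block matching numbers}) \le s$ translates into the defining inequalities of $\mathscr{T}_1$ and $\mathscr{T}_2$: a clique $K_m$ contributes $\lfloor m/2\rfloor$ to the matching number, so $K_{2k-1}$ costs $k-1$, $K_{2k-2}$ costs $k-1$ as well but the right bookkeeping gives the $(k-2)y$ term once the attachment vertex is accounted for, and the residual block of order $z$ costs $\lfloor (z-1)/2\rfloor$ after its attachment vertex is charged to the large block. I would verify these matching counts carefully, because an off-by-one here is exactly what distinguishes $\mathscr{T}_1$ from $\mathscr{T}_2$.

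\textbf{Step 3 (The two-regime maximum and the main obstacle).}
The two terms in the final maximum arise from whether the residual block $K_z$ shares its attachment vertex with the large block ``for free'' or must pay an extra ${k-1\choose r-2}$ for an additional edge/clique interaction; this is the meaning of the $-{k-1\choose r-2}$ correction and the shift between the constraint $+k\le s$ (for $\mathscr{T}_1$) and $+k-1\le s$ (for $\mathscr{T}_2$). I expect the \textbf{main obstacle} to be the lower-bound/optimality argument rather than the construction: one must show that \emph{no} other block configuration — e.g. using blocks of intermediate order, or attaching small blocks to each other rather than to $B_1$, or using the $H_{n,2k,2}$ extremizer instead of $H_{n,2k,k-1}$ — can beat the stated maximum. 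This requires a convexity/exchange argument showing that, subject to a fixed matching budget, it is always at least as good to concentrate vertices into the largest feasible cliques and to hang everything off the single large block; the function $f(x)={x+1\choose r}-x{k\choose r-1}$ monotonicity (analogous to inequality \eqref{eq: the convexity of k choose r}) should drive this, but adapting it to the even case with the $k-1$ attachment parameter is where the real work lies. Once the exchange argument pins down the block profile to $(x,y,z)$ of the stated form, taking the better of the two attachment regimes yields exactly the displayed formula, and for $n$ large the leading ${k-1\choose r-1}n$ term dominates so that the large block is forced to be $H_{n',2k,k-1}$, completing the proof.
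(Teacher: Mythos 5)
Your Steps 1 and 2 follow the paper's route closely (the star-graph decomposition, small blocks forced to be cliques of orders $2k-1$ and $2k-2$ plus one residual block of order $z$, additivity of $N_r$ over blocks), and your Step 3 worry about solving the optimization is largely moot: the theorem states the answer \emph{as} a maximum over the feasible sets $\mathscr{T}_1,\mathscr{T}_2$, so no exchange argument beyond the structural lemmas of Section 2 is needed for general $r$ (the optimization is only carried out explicitly for $r=2$ in Theorem \ref{cor: ex(c2k)}).

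The genuine gap is in your explanation of the two-term maximum. You attribute the $-\binom{k-1}{r-2}$ correction, and the shift of the constraint from ``$+k\le s$'' to ``$+k-1\le s$'', to how the residual block $K_z$ attaches to the large block. That is not where the dichotomy comes from, and an argument built on that mechanism would not close: in the star graph the residual block always attaches at the same cut vertex $s_1$, so there is no choice to be made there. The two regimes correspond to the two possible values $\nu(B_1)\in\{k-1,k\}$ of the matching number of the large block $B_1$ (which is $2$-connected, $C_{\ge 2k}$-free, and satisfies $\nu(B_1)\ge k-1$ by construction). If $\nu(B_1)=k$, Corollary \ref{cor: ex(n,K_r) for 2-connected graph} forces $B_1\cong H_{v(B_1),2k,k-1}$ with core $K_{k+1}$, and $B_1$ consumes $k$ units of the matching budget: this yields $\mathscr{T}_1$. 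If $\nu(B_1)=k-1$, then $\delta(B_1)\le k-1$ (otherwise Dirac's theorem produces a cycle of length at least $2k$), and one must invoke Theorem \ref{thm: duan thm} --- a bound driven by the matching number rather than the circumference --- to force $B_1\cong H_{v(B_1),2k-1,k-1}$; its core is only $K_k$, which costs exactly $\binom{k-1}{r-2}$ copies of $K_r$ (note $\binom{k+1}{r}-\binom{k-1}{r-2}=\binom{k}{r}+\binom{k-1}{r-1}$) while freeing one unit of matching budget for the small blocks: this yields $\mathscr{T}_2$ with its relaxed constraint. Your proposal never performs this case split on $\nu(B_1)$ and never invokes the bounded-matching clique theorem, so it cannot produce the second term of the maximum.
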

When $r=2$, we have the following result.
\begin{theorem}\label{cor: ex(c2k)}
    If $k\geq 2$, and $s=q(k-1)+t\geq k-1$, where $q=\lf\frac{s}{k-1}\rf\geq 1$ and $0\leq t\leq k-2$, then
    $$ex(n,\{C_{\geq 2k},M_{s+1}\})=(k-1)n-{k\choose 2}+(k-1)(q-1)+\epsilon$$ when $n$ is large enough, where  $\epsilon=1$ if $t\ge 1$ and 0 otherwise. Moreover, when $\epsilon=0$ (resp. $\epsilon=1$), the extremal graph is $St^1(n,2k,q)$ (resp. $St^2(n,2k,q)$), where $q=\lf\frac{s}{k-1}\rf$ and $St^1(n,2k,q)$ and $St^2(n,2k,q)$ would be given in Section 4.
\end{theorem}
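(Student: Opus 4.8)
The plan is to derive this statement as the $r=2$ specialization of Theorem \ref{thm: exeven when n large enough}, so that the real work is to simplify the formula there and to solve the resulting integer optimization. Since Theorem \ref{thm: exeven when n large enough} is stated for $k\ge 3$, I would first dispose of the boundary case $k=2$ directly: a $C_{\ge 4}$-free graph has every block equal to $K_2$ or $K_3$, so a connected such graph on $n$ vertices with $b$ triangle-blocks has exactly $n-1+b$ edges, and once $n$ is large the condition $\nu(G)\le s$ forces $b\le s-1$ (a star $K_{1,n-1}$ with $s-1$ vertex-disjoint edges added among its leaves realizes $b=s-1$, has matching number exactly $s$, and has $n+s-2$ edges). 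Because $k=2$ forces $t=0$, $\epsilon=0$, $q=s$, this matches the claimed value $n+s-2$.

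For $k\ge 3$ I would substitute $r=2$ into $g(x,y,z)$. The first observation is that the $y$-contribution cancels identically, since ${2k-2\choose 2}=(k-1)(2k-3)$ is exactly $(2k-3){k-1\choose 1}$; likewise the $x$-coefficient collapses to $(2k-1)(k-1)-(2k-2)(k-1)=k-1$. Collecting terms yields
$$g(x,y,z)=x(k-1)+\phi(z)-{k\choose 2}+1,\qquad \phi(z)=\frac{(z-1)(z-2k+2)}{2},$$
using the identity $\tfrac{(k+1)(k-2)}{2}={k\choose 2}-1$. Since $y$ enters the constraints only through the nonnegative term $(k-2)y$ and is absent from $g$, it is optimal to take $y=0$; the problem reduces to maximizing $x(k-1)+\phi(z)$ over $x\in\mathbb{N}$ and $1\le z\le 2k-1$ subject to $(k-1)x+\lf\frac{z-1}{2}\rf\le s-k$ for $\mathscr{T}_1$, respectively $\le s-k+1$ for $\mathscr{T}_2$.

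The crux is this one-variable optimization. Writing $w=\lf\frac{z-1}{2}\rf\in\{0,\dots,k-1\}$ and $\phi^*(w)=\max_z\phi(z)$ over the (one or two) values of $z$ with floor $w$, I would show $\phi^*(w)\le 0$ for $0\le w\le k-2$ while $\phi^*(k-1)=\phi(2k-1)=k-1$: concretely $\phi(2w+2)=(2w+1)(w+2-k)\le 0$ and $\phi(2w+1)=w(2w+3-2k)\le 0$ on that range. Thus a unit of $x$ contributes $k-1$ at a budget cost of $k-1$, while the only profitable clique size is $z=2k-1$ (also $k-1$ at cost $k-1$) and every intermediate value merely wastes budget; a short argument then gives the clean evaluation $P(B)=(k-1)\lf B/(k-1)\rf$ for the optimum with budget $B$. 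Plugging in $B_1=s-k$ and $B_2=s-k+1$ and using $s=q(k-1)+t$ with $0\le t\le k-2$, the floor arithmetic yields $P_2=(k-1)(q-1)$ always, while $P_1=(k-1)(q-1)$ if $t\ge 1$ and $P_1=(k-1)(q-2)$ if $t=0$ (with $\mathscr{T}_1$ empty when $s=k-1$). Comparing $M_1=P_1-{k\choose 2}+1$ with $M_2=P_2-{k\choose 2}$ via the floor jump $\lf B_2/(k-1)\rf-\lf B_1/(k-1)\rf$ shows $M_1$ wins exactly when $t\ge 1$ and $M_2$ wins when $t=0$, producing the single formula $(k-1)(q-1)+\epsilon-{k\choose 2}$ with $\epsilon=\mathbbm{1}[t\ge 1]$.

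Finally I would read off the extremal graphs by tracking the maximizing triple $(x,y,z)$: the $t=0$ optimum is attained in $\mathscr{T}_2$ and corresponds to $St^1(n,2k,q)$, while the $t\ge1$ optimum is attained in $\mathscr{T}_1$ (accounting for the extra $+1$ over $M_2$) and corresponds to $St^2(n,2k,q)$, with uniqueness for large $n$ inherited from Theorem \ref{thm: exeven when n large enough}. I expect the main obstacle to be the optimization step, in particular the sign analysis establishing that intermediate clique sizes are never profitable together with the careful bookkeeping of the two floor functions near the empty-$\mathscr{T}_1$ boundary; the algebraic vanishing of $y$ and the identity $\tfrac{(k+1)(k-2)}{2}={k\choose 2}-1$ are precisely what collapse the two cases to the single parameter $\epsilon$.
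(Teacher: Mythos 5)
Your proposal is correct and follows the same overall route as the paper: specialize Theorem \ref{thm: exeven when n large enough} to $r=2$ and solve the resulting integer optimization over $(x,y,z)$. The execution differs in one place. The paper first proves a graph-theoretic claim (switching all vertices of any block of order $b\le 2k-2$ into $S_0$ does not decrease the edge count, since $(b-1)(k-1)-\binom{b}{2}\ge 0$), which forces $y=0$ and $z=1$ before optimizing over $x$ alone; you instead work directly with the closed form of $g(x,y,z)$, observing that the $y$-coefficient vanishes and that $\phi(z)=\frac{(z-1)(z-2k+2)}{2}\le 0$ for $1\le z\le 2k-2$ while $\phi(2k-1)=k-1$. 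These are the same computation in two guises (the paper's switching inequality is exactly $-\phi(b)\ge 0$), and the subsequent floor arithmetic comparing $M_1$ and $M_2$ is identical, so neither route buys much over the other. One genuine value added by your write-up is the separate treatment of $k=2$: the statement of Theorem \ref{cor: ex(c2k)} allows $k\ge 2$, but Theorem \ref{thm: exeven when n large enough} and the Section 2 machinery it rests on assume $2k\ge 5$, so the paper's proof as written does not cover $k=2$; your direct block argument (every block is $K_2$ or $K_3$, so $e(G)=n-1+b$ with $b$ the number of triangle blocks and $b\le s-1$) fills that gap, though you should spell out why $b$ triangle blocks together with at least one $K_2$-block force $\nu(G)\ge b+1$ (e.g.\ root the block--cut tree at a pendant edge and match greedily outward).
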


The paper is organized as follows. In Section \ref{sec: pre}, we give basic definitions and lemmas.
In Section \ref{sec: exodd}, we will prove  Theorem \ref{thm: exodd when n large enough}. The proofs of Theorems \ref{thm: exeven when n large enough} and  \ref{cor: ex(c2k)} will be given in Section \ref{sec: exeven}.
\section{Preliminary}\label{sec: pre}
Let $G=(V,E)$ be a graph. For  $X\subseteq V$, we denote $N_X^r(v)=\{U:U\subseteq X,|U|=r-1,G[U\cup \{v\}]=K_r\}$ and $d_X^r(v)=|N_X^r(v)|$. If $X=V(G)$, we write $d^r(v)$ and $N^r(v)$ instead $d_{V(G)}^r(v)$ and $N_{V(G)}^r(v)$. And if $r=2$, we write $N_{X}(v)$ and $d_X(v)$ instead of $N_{X}^2(v)$ and $d_X^2(v)$, respectively. For $e\in E(G)$ (resp. $e\notin E(G)$), $G - e$ (resp. $G + e$) is the graph obtained by deleting  $e$  (resp. adding  $e$) in $G$. We will use $\nu(G)$ to denote the matching number of $G$.

Let $H$ be a connected graph. We  can decompose it into blocks, denoted as $B_i$, $i=1,\ldots, h$, and satisfying
$B_i\cap(\cup_{j=1}^{i-1}B_j)=\{u_i\}\subseteq V(B_i)$ ($2\le i\le h$), i.e. $u_i$ is a vertex cut separating $B_i$ and $\cup_{j=1}^{i-1}B_j$ (see Figure 1). If $H$ is 2-connected, then we let $H=B_1$.  We arbitrarily choose a vertex $u_1\in V(B_1)$. Let $H_1:=H$ and $H_i:=H_{i-1}-\sum_{v\in V(B_i)}vu_i+\sum_{v\in V(B_i)}vu_1$ for $i=2,\ldots,h$.
Then  the \textbf{star graph }of $H$, denoted by $St(H,B_1,u_1)$, is the graph  $H_h$.  Notice that the different choices of $B_1$ and $u_1\in V(B_1)$ may cause different star graphs (see Figure 2). The matchings of $H$ and $St(H,B_1,u_1)$ have the following relationship.
\begin{figure}[h]
        \centering        \includegraphics[width=0.6\textwidth]{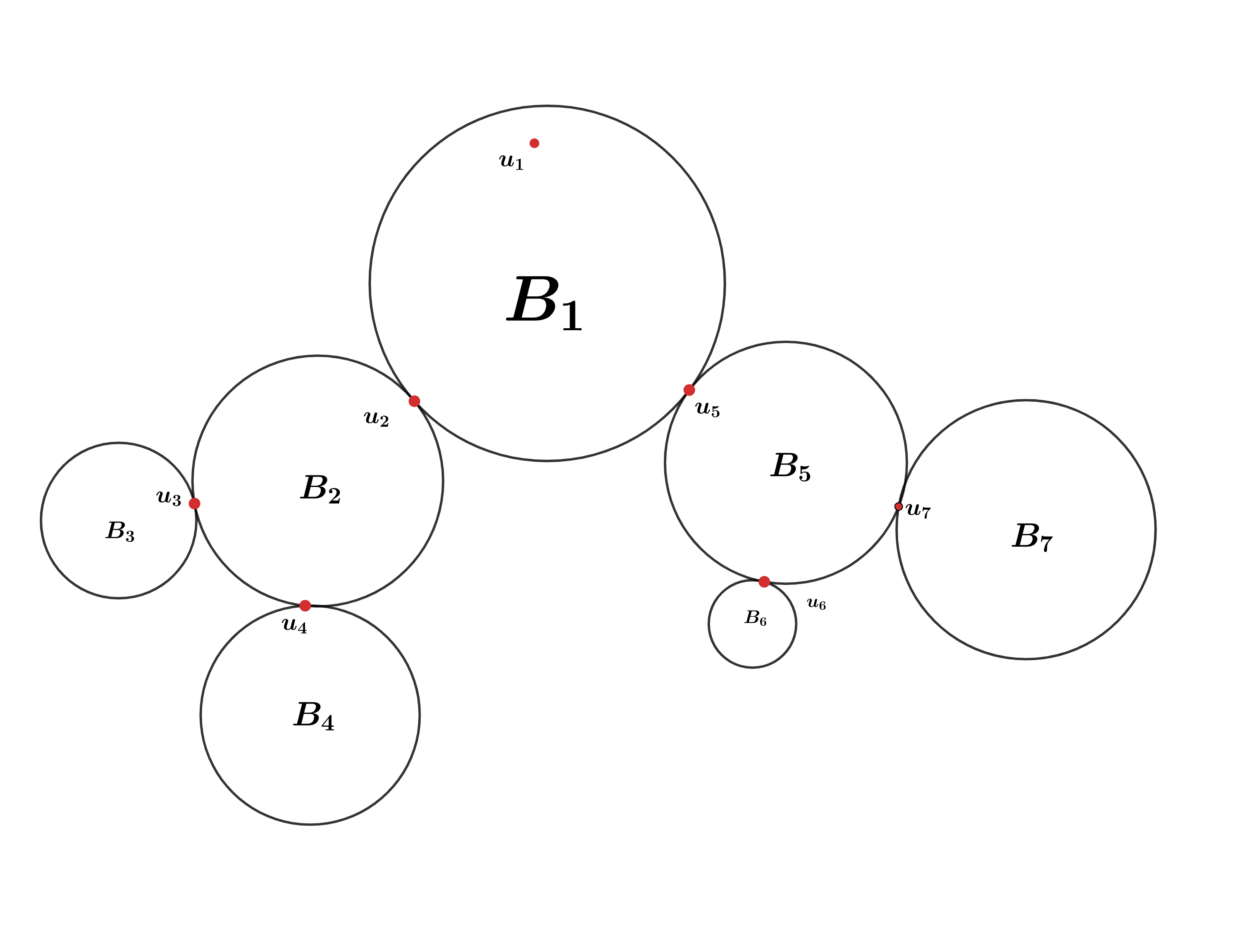}
        \caption{The block decomposition of  $G$ and its representative vertices.}
\end{figure}
\begin{figure}[h]\label{fig: Star graphs}
        \centering
\includegraphics[width=0.8\textwidth]{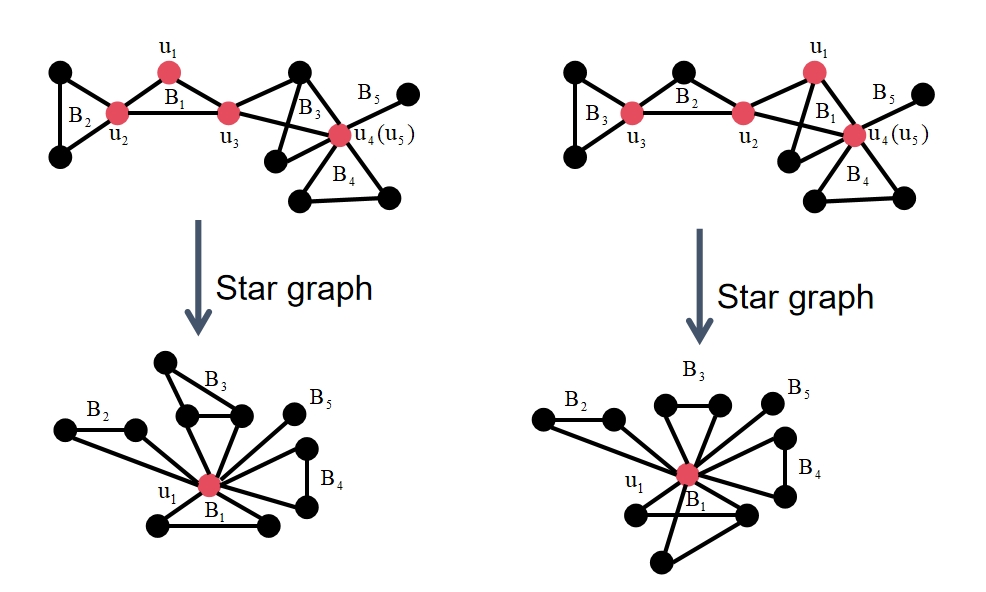}
        \caption{ Different choice of $B_1$ and $u_1$ may cause different star graph.}
\end{figure}
\begin{lemma}\label{lem: the matching number of star graph}
    Let $H$ be a connected graph and $St(H,B_1,u_1)$ be a star graph of $H$. Let $M$ be a matching of $St(H,B_1,u_1)$. If $u_1$ is not covered by $M$ or $u_1v\in M$ with $v\in V(B_1)$, then there is a matching with the same size in $H$.
\end{lemma}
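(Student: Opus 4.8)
The plan is to exploit the fact that the star graph $St(H,B_1,u_1)$ is structurally nothing more than $H$ with every block reattached at the single central vertex $u_1$. First I would record the following description of the construction: for each $i\ge 2$ the operation $H_i=H_{i-1}-\sum_{v\in V(B_i)}vu_i+\sum_{v\in V(B_i)}vu_1$ deletes exactly the edges of $B_i$ incident to its cut vertex $u_i$ and redirects them to $u_1$, leaving every other edge of $B_i$ untouched. Hence $St(H,B_1,u_1)$ has blocks $B_1,B_2',\dots,B_h'$, where $B_i'$ is an isomorphic copy of $B_i$ in which the cut vertex $u_i$ has been replaced by $u_1$ (so the isomorphism $\varphi_i\colon B_i'\to B_i$ fixes $V(B_i)\setminus\{u_i\}$ and sends $u_1\mapsto u_i$), and any two of these blocks meet only in $u_1$. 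I would verify this by checking that the internal edges of $B_i$ are never disturbed by the steps $j\ne i$: the only vertices shared between distinct blocks are attaching cut vertices, and the ordering property $B_i\cap(\cup_{j<i}B_j)=\{u_i\}$ forces a vertex shared by $B_i$ and $B_j$ (with $i<j$) to be exactly $u_j$.

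With this in hand, given a matching $M$ of $St(H,B_1,u_1)$ I would split it as $M=\bigsqcup_{i=1}^h M_i$ with $M_i=M\cap E(B_i')$; this is a genuine partition because the blocks pairwise share only $u_1$, which $M$ covers at most once. I then push each $M_i$ back into $H$ by applying $\varphi_i$, obtaining edge sets $\widetilde M_i\subseteq E(B_i)\subseteq E(H)$, and set $M'=\bigcup_i\widetilde M_i$. The heart of the argument is to show $M'$ is a matching, from which $|M'|=\sum_i|\widetilde M_i|=\sum_i|M_i|=|M|$ follows at once. Here the hypothesis enters: if $u_1$ is uncovered, then every $M_i$ avoids $u_1$, so each $\widetilde M_i$ avoids the cut vertex $u_i$; if instead $u_1v\in M$ with $v\in V(B_1)$, then this single edge lies in $M_1$ and maps to the genuine edge $u_1v\in E(B_1)\subseteq E(H)$, while every $M_i$ with $i\ge 2$ still avoids $u_1$ and hence $\widetilde M_i$ still avoids $u_i$.

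Disjointness then follows from the block structure. Two translated matchings $\widetilde M_i,\widetilde M_j$ (say $i<j$) can clash only at a vertex of $V(B_i)\cap V(B_j)$, which is at most the single attaching vertex $u_j$; but $\widetilde M_j$ avoids $u_j$ by the previous step, so there is no clash. In the second case one must additionally check the special edge $u_1v$: its endpoint $u_1$ is avoided by every $\widetilde M_i$ with $i\ge 2$, and its endpoint $v$, if it also lies in some block $B_i$, must equal that block's attaching vertex $u_i$, which $\widetilde M_i$ again avoids. Assembling these observations shows $M'$ is a matching of $H$ with $|M'|=|M|$, as required. I would also note where the hypothesis is \emph{necessary}: an edge $u_1v\in M$ with $v\in V(B_i')$ for some $i\ge 2$ would translate to $u_iv$, which may collide with an edge of $M_1$ incident to $u_i$ inside $B_1$, destroying the matching property; this is precisely the case the assumption rules out.

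The step I expect to be the real obstacle is the first one: pinning down precisely that $St(H,B_1,u_1)$ decomposes into blocks $B_1,B_2',\dots,B_h'$ meeting only at $u_1$, and that each $\widetilde M_j$ necessarily avoids the attaching vertex $u_j$. All of the vertex-disjointness bookkeeping hinges on the claim that the unique vertex shared by an earlier and a later block is the later block's cut vertex, which requires a careful use of the defining property of the block ordering together with the fact that the relocation removes $u_i$ from $B_i'$ while leaving it inside the earlier blocks. Once this structural description is cleanly established, the matching translation and the role played by the hypothesis are routine.
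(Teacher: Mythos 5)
Your proof is correct, but it runs a lot of machinery that, under the stated hypothesis, collapses to the paper's one-line argument. The paper simply observes that in both cases $M\subseteq E(H)$: the only edges of $St(H,B_1,u_1)$ that are not already edges of $H$ are the redirected edges $wu_1$ with $w\in V(B_i)\setminus\{u_i\}$ for some $i\ge 2$, and all of these are incident to $u_1$; since no such $w$ lies in $V(B_1)$ (because $V(B_i)\cap V(B_1)\subseteq\{u_i\}$), the hypothesis forces the at most one edge of $M$ at $u_1$ to be an original edge of $B_1$, while every other edge of $M$ avoids $u_1$ and is therefore also an edge of $H$. So $M$ itself is the required matching. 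In your argument the isomorphisms $\varphi_i$ act as the identity on every edge they are actually applied to --- each $M_i$ with $i\ge 2$ avoids $u_1$, so $\widetilde M_i=M_i$ and $M'=M$ --- which makes the disjointness verification vacuous. Nothing you write is wrong, and your block decomposition of the star graph is accurate, but the step you single out as the real obstacle (establishing that the blocks of $St(H,B_1,u_1)$ are $B_1,B_2',\dots,B_h'$ meeting only at $u_1$) is not needed: one only has to identify which edges of the star graph are new, not how its blocks reassemble.
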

\begin{proof} In both cases, we have $M\subseteq E(H)$ by the construction of $St(H,B_1,u_1)$. Thus the result holds.
\end{proof}

We introduce a few theorems about the long cycles and long paths in a graph, which we will use later.
\begin{theorem}[Dirac \cite{dirac1952some}]\label{thm: degree geq k, vertices geq 2k}
If the degree of every vertex of the $2$-connected graph $G$ is $\geq k~(k\geq 2)$ and if $v(G)\geq 2k$, then $G$ contains a cycle with at least $2k$ edges.
\end{theorem}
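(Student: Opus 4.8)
The statement is the classical circumference bound of Dirac: a $2$-connected graph with minimum degree at least $k$ and at least $2k$ vertices must contain a cycle of length at least $2k$. The plan is to argue by extremality on a \emph{longest path}, combining the degree hypothesis (which makes the path long) with $2$-connectivity (which lets us either close it into a spanning cycle or reroute to contradict maximality). First I would fix a longest path $P=x_0x_1\cdots x_\ell$ in $G$. By maximality, every neighbour of $x_0$ and of $x_\ell$ must already lie on $P$; since $\deg(x_0)\ge k$ this forces $\ell\ge k$, so $P$ has at least $k$ edges. It is then convenient to record the index sets $S=\{i:x_0x_i\in E\}$ and $T=\{i:x_\ell x_{i-1}\in E\}$, both contained in $\{1,\dots,\ell\}$ and both of size at least $k$.

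The easy case is $S\cap T\neq\emptyset$. Choosing $i$ in the intersection, the closed walk $x_0x_1\cdots x_{i-1}x_\ell x_{\ell-1}\cdots x_i x_0$ is a cycle $C$ passing through \emph{every} vertex of $P$. I would then argue that $C$ must be spanning: if some vertex $w$ lay off $C$, then since $G$ is connected there is a vertex off $C$ adjacent to $V(C)=V(P)$, and breaking $C$ at that attachment and prepending the outside vertex produces a path of length $\ell+1$, contradicting the maximality of $P$. Hence $V(C)=V(G)$, so $|C|=n\ge 2k$ and we are done. The remaining case is $S\cap T=\emptyset$, which gives $\ell\ge|S|+|T|\ge 2k$: now $P$ is a very long path, but I still have only a \emph{path}, not a cycle of the required length.

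The hard part will be exactly this conversion of a long path into a long cycle in the disjoint case, and this is where $2$-connectivity is essential (the degree bound alone only closes a path at the farthest neighbour of an endpoint, yielding a cycle of length about $k$, not $2k$). My plan here is to run a Pósa-type rotation argument: keeping $x_0$ fixed, each rotation across an edge $x_0x_i$ replaces the endpoint $x_\ell$ by a new endpoint while preserving the vertex set $V(P)$, and every endpoint so produced again has all its neighbours inside $V(P)$. Using $2$-connectivity I would then show that the family of reachable endpoints cannot be separated from $x_0$ inside $G[V(P)]$, which yields a second chord closing a cycle through at least $|S|+|T|\ge 2k$ of these vertices — or else reduces us to a cycle through all of $V(P)$, handled exactly as in the easy case and again giving length $n\ge 2k$.

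To make the role of the factor $2$ transparent before tackling the general rotation step, I would first isolate the clean illustrative instance via a longest cycle $C$: if $C$ is not spanning, take an off-cycle component $H$ with attachment set $A\subseteq V(C)$, where $|A|\ge 2$ by $2$-connectivity. Rerouting through a path of $H$ and invoking maximality of $C$ shows that consecutive attachments are never adjacent on $C$, so each arc between them has length at least $2$ and hence $|C|\ge 2|A|$. When $H$ is a single vertex its $\ge k$ neighbours are all attachments, so $|A|\ge k$ and $|C|\ge 2k$ immediately; the general case interpolates this by routing the degree count through $H$. I expect the technical heart of the write-up to be precisely the bookkeeping that upgrades ``$2$-connected plus $\delta\ge k$'' into the numerical conclusion $|C|\ge 2k$, since an off-cycle vertex may spend most of its degree inside $H$ rather than directly on the cycle.
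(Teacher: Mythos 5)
The paper does not prove this statement: it is quoted as Dirac's classical theorem and used as a black box, so there is no in-paper argument to compare against. Judged on its own terms, your proposal has a genuine gap — only the easy half is actually carried out.

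Your longest-path setup and the case $S\cap T\neq\emptyset$ are correct: the cycle through all of $V(P)$ must be spanning by maximality of $P$ together with connectivity, giving length $n\ge 2k$. But in the case $S\cap T=\emptyset$ you only obtain a \emph{path} with at least $2k$ edges, and converting that path into a cycle of length at least $2k$ — which is exactly where the content of the theorem lies — is left as a plan. The P\'osa rotation sketch does not close the gap as stated: rotations preserve $V(P)$ and generate many endpoints, but the chord you eventually find closes a cycle through only some subset of $V(P)$, and nothing in your outline controls the \emph{length} of that cycle; the bound $|S|+|T|\ge 2k$ counts indices on the original path, not vertices on the cycle you would build. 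Your alternative route via a longest cycle $C$ and an off-cycle component $H$ with attachment set $A$ has the same problem, which you flag yourself: $|C|\ge 2|A|$ is easy, but $|A|\ge k$ holds only when $H$ is a single vertex. For $|H|\ge 2$ one must instead produce a long path through $H$ between two attachment vertices (so that each arc of $C$ between them is forced to be long), and that requires an additional degree-based path lemma — essentially Theorem \ref{thm: deg geq k but a single vertex, v leq 2k-1} applied inside $G[V(H)\cup A]$ — together with nontrivial bookkeeping on how the degree of an internal vertex of $H$ splits between $H$ and $A$. Since both of your proposed finishes defer precisely this step (``I would show\ldots'', ``I expect the technical heart to be\ldots''), the proof is incomplete at its crux.
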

\begin{theorem}[Erd\H{o}s and Galli \cite{gallai1959maximal}]\label{thm: deg geq k but a single vertex, v leq 2k-1}
    If the vertex $u$ of the graph $G$ is not isolated and the degree of every vertex of $G$ distinct from $u$ is $\geq k$, $k\geq 2,$ and if $v(G)\leq 2k-1$, then $u$ is connected by a Hamiltonian path.
\end{theorem}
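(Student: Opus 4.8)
The plan is to argue via a longest path rooted at $u$ together with a single round of Pósa-type rotations. Write $n=v(G)$, so the hypothesis $v(G)\le 2k-1$ reads $n\le 2k-1$. Since $u$ is not isolated, $G$ contains a path with endpoint $u$ of length at least $1$; let $P=x_0x_1\cdots x_m$ be a longest path among all paths with $x_0=u$. It suffices to prove that $P$ is Hamiltonian, for then $u$ is an endpoint of a Hamiltonian path. Suppose for contradiction that $P$ misses some vertex $w$, i.e. $w\notin V(P)$; note $w\neq u$ since $u=x_0\in V(P)$, so $d(w)\ge k$.

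First I would extract information from the maximality of $P$. The endpoint $x_m$ satisfies $x_m\neq u$ (as $m\ge 1$), hence $d(x_m)\ge k$; moreover every neighbor of $x_m$ lies on $P$, since otherwise $P$ could be extended past $x_m$. Thus $N(x_m)\subseteq\{x_0,\dots,x_{m-1}\}$ and $|N(x_m)|\ge k$, which already forces $m\ge k$. Next I would perform the rotation: for each index $i$ with $x_ix_m\in E(G)$ and $0\le i\le m-1$, the walk $x_0\cdots x_i x_m x_{m-1}\cdots x_{i+1}$ is again a path with endpoint $u=x_0$ and the same number $m$ of edges, hence also a longest path rooted at $u$. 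Its other endpoint is $x_{i+1}$, so by the same maximality argument $x_{i+1}$ has no neighbor off $P$; in particular $x_{i+1}\not\sim w$. Letting $N^+=\{x_{i+1}\colon x_ix_m\in E(G),\ 0\le i\le m-1\}$, the map $i\mapsto i+1$ is injective, so $|N^+|=|N(x_m)|\ge k$, and no vertex of $N^+$ is adjacent to $w$.

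The contradiction would then come from counting the degree of $w$. Since $N^+\subseteq V(P)$ while $w\notin V(P)$, we have $w\notin N^+$ and $w\notin N(w)$; combined with $N(w)\cap N^+=\emptyset$ this gives $N(w)\subseteq V(G)\setminus(N^+\cup\{w\})$, whence $d(w)\le n-|N^+|-1\le(2k-1)-k-1=k-2$, contradicting $d(w)\ge k$. Hence $P$ is Hamiltonian. I expect the only delicate points to be bookkeeping rather than conceptual: verifying that the rotated walk is genuinely a longest path rooted at $u$ (so that its free endpoint inherits the ``no off-path neighbor'' property), checking the injectivity $|N^+|=|N(x_m)|$, and confirming that short-path cases are subsumed by the inequality $m\ge k\ge 2$. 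The key quantitative input is simply that $n\le 2k-1$ is small enough that the at least $k$ vertices of $N^+$ together with $w$ leave room for fewer than $k$ possible neighbors of $w$.
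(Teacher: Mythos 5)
Your proof is correct. Note first that the paper does not prove this statement at all: it is quoted as a classical theorem of Erd\H{o}s and Gallai \cite{gallai1959maximal} and used as a black box (in Lemma \ref{lem: all blocks are clique}), so there is no in-paper argument to compare against; your write-up supplies a self-contained proof. The route you take is the standard rotation technique usually attributed to P\'osa, which postdates the original 1959 paper (Erd\H{o}s and Gallai argued by induction, via their machinery on maximal paths and circuits); the rotation proof is shorter and entirely elementary, which is what it buys you here. All the delicate points you flag do check out: the rotated walk $x_0\cdots x_i x_m x_{m-1}\cdots x_{i+1}$ is a genuine path on the same vertex set $V(P)$ with the same number of edges and endpoint $x_0=u$, so its free endpoint $x_{i+1}$ inherits the property of having no neighbor outside $V(P)$; the map $x_i\mapsto x_{i+1}$ on $N(x_m)\subseteq\{x_0,\dots,x_{m-1}\}$ is injective, giving $|N^+|=|N(x_m)|\geq k$ (here $m\geq 1$ forces $x_m\neq u$, so $d(x_m)\geq k$, and the degenerate case $i=m-1$ just returns $P$ itself, harmlessly putting $x_m\in N^+$); and since $w\notin V(P)$ implies $w\neq u$, the count $d(w)\leq n-|N^+|-1\leq (2k-1)-k-1=k-2<k$ is a genuine contradiction. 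Two small observations: the hypothesis $m\geq k$ that you derive is never actually needed for the final count, and your argument nowhere assumes connectivity of $G$ --- correctly so, since connectivity is not among the hypotheses (and in fact follows from them, as any component avoiding $u$ would need at least $k+1$ vertices, as would the component of $u$, exceeding $2k-1$).
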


We need the following result about a graph with bounded matching number.
\begin{lemma}[Luo et al.  \cite{luo2024turannumbercompletebipartite}]\label{thm: TB theorem}
     Let $G$ be a graph with $n$ vertices. Then  $\nu(G)\le s$ if and only if there is a subset $X\subseteq V(G)$ such that
     \begin{equation}\label{eq: TB inequaulity}
         |X|+\sum_{i=1}^m \left\lfloor\frac{|V(C_i)|}{2}\right\rfloor\leq s,
     \end{equation}
      where $C_1,\ldots,C_m$ are  the components of $G-X $.
\end{lemma}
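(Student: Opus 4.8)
The plan is to recognize this statement as a reformulation of the classical Tutte--Berge formula and to establish the two implications separately, since the stated equivalence is really just a parity bookkeeping on top of that formula.

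For the sufficiency direction, suppose such a set $X$ exists, and fix an arbitrary matching $M$ of $G$. I would classify the edges of $M$ according to how they meet $X$: every edge of $M$ either is incident to $X$ or lies entirely inside one of the components $C_1,\dots,C_m$ of $G-X$. The edges incident to $X$ are pairwise vertex-disjoint, so there are at most $|X|$ of them; the edges lying inside a fixed $C_i$ form a matching of $C_i$, so there are at most $\lfloor|V(C_i)|/2\rfloor$ of them. Summing gives $|M|\le |X|+\sum_{i}\lfloor|V(C_i)|/2\rfloor\le s$, and as $M$ was arbitrary this yields $\nu(G)\le s$.

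For the necessity direction I would invoke the Tutte--Berge formula $\nu(G)=\tfrac12\left(n-\max_{U\subseteq V}\left(o(G-U)-|U|\right)\right)$, where $o(G-U)$ denotes the number of components of $G-U$ having an odd number of vertices. Let $X$ be a set attaining this maximum and let $C_1,\dots,C_m$ be the components of $G-X$. Since $\sum_i|V(C_i)|=n-|X|$ and $\lfloor|V(C_i)|/2\rfloor$ equals $|V(C_i)|/2$ on even components and $(|V(C_i)|-1)/2$ on odd ones, we get $\sum_i\lfloor|V(C_i)|/2\rfloor=\tfrac12\big((n-|X|)-o(G-X)\big)$. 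Substituting $o(G-X)-|X|=n-2\nu(G)$ at the maximizer then gives
\[
|X|+\sum_{i=1}^m\left\lfloor\frac{|V(C_i)|}{2}\right\rfloor=\tfrac12\big(n-(o(G-X)-|X|)\big)=\nu(G)\le s,
\]
so the Tutte--Berge witness $X$ is exactly the set required by the lemma.

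The only genuinely substantial step is the necessity direction, which is precisely the content of Tutte--Berge: one needs the existence of a witness $U$ for which $o(G-U)-|U|=n-2\nu(G)$. The cleanest route is to cite the Tutte--Berge formula directly; if a self-contained argument is preferred, I would instead extract such a witness from the Gallai--Edmonds decomposition, or from the structure forced by the absence of an augmenting path relative to a maximum matching. Everything beyond that is routine manipulation of the floor function and the odd/even parity of the component orders.
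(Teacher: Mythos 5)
Your proof is correct: the sufficiency direction is a routine counting of matching edges against $X$ and the components of $G-X$, and the necessity direction is an exact parity computation showing that a maximizer in the Tutte--Berge formula is precisely the required witness $X$. The paper itself gives no proof but simply imports this lemma from Luo, Zhao and Lu (its label even abbreviates ``Tutte--Berge''), and your derivation is the standard and evidently intended one, so there is nothing to fault.
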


For every vertex set $X$ satisfies Lemma \ref{thm: TB theorem},  we denote $J_X(G):= \{i~|~ |V(C_i)| \geq 2, 1\leq i\leq m\}$ and  $I_X(G)=\{v~|~ v~ \text{is an isolated vertex in}~ G-X\}$, where $C_1,\ldots, C_m$ are the components of $G-X$. Then we have the following result.
\begin{lemma}\label{lem: almost all vertices independent}
    We have $|X|+\sum_{i\in J_X(G)}|V(C_i)|\leq 3s$, and $|I_X(G)|\geq n-3s$.
\end{lemma}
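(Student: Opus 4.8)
The plan is to read off both inequalities directly from the characterization in Lemma~\ref{thm: TB theorem} together with a single elementary estimate on the floor function. First I would fix a set $X\subseteq V(G)$ witnessing $\nu(G)\le s$, so that \eqref{eq: TB inequaulity} holds with $C_1,\dots,C_m$ the components of $G-X$. I then split these components according to their size: the singletons are exactly the isolated vertices counted by $I_X(G)$, while the remaining components are those indexed by $J_X(G)$. Since $\lfloor |V(C_i)|/2\rfloor=0$ for every singleton component, those terms drop out of the sum in \eqref{eq: TB inequaulity}, leaving
$$|X|+\sum_{i\in J_X(G)}\left\lfloor\frac{|V(C_i)|}{2}\right\rfloor\le s.$$

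The crucial step is the pointwise inequality $|V(C_i)|\le 3\lfloor |V(C_i)|/2\rfloor$, which holds for every $i\in J_X(G)$ precisely because $|V(C_i)|\ge 2$; this is immediate by checking the even and odd cases separately (equality occurring at $|V(C_i)|=3$). Applying this bound termwise, together with the trivial $|X|\le 3|X|$, I would multiply the displayed inequality by $3$ to obtain
$$|X|+\sum_{i\in J_X(G)}|V(C_i)|\le 3\left(|X|+\sum_{i\in J_X(G)}\left\lfloor\frac{|V(C_i)|}{2}\right\rfloor\right)\le 3s,$$
which is the first assertion of the lemma.

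For the second assertion I would partition $V(G)$ into $X$, the isolated vertices $I_X(G)$, and the vertices lying in the non-trivial components, giving $n=|X|+|I_X(G)|+\sum_{i\in J_X(G)}|V(C_i)|$. Rearranging and invoking the bound just proved yields $|I_X(G)|=n-|X|-\sum_{i\in J_X(G)}|V(C_i)|\ge n-3s$. I do not expect any genuine obstacle here: the only point deserving care is that the (possibly numerous) singleton components contribute nothing to \eqref{eq: TB inequaulity}, which is exactly why the size of $X$ together with the non-trivial part is controlled by $3s$, while $I_X(G)$ must absorb the bulk of the $n$ vertices.
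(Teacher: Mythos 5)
Your proof is correct and follows essentially the same route as the paper: both arguments reduce to the pointwise bound $|V(C_i)|\le 3\lfloor |V(C_i)|/2\rfloor$ for components of size at least $2$, combined with the inequality of Lemma~\ref{thm: TB theorem} and the partition $n=|X|+|I_X(G)|+\sum_{i\in J_X(G)}|V(C_i)|$. The only cosmetic difference is that you treat even and odd component sizes uniformly where the paper reduces to the odd worst case.
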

\begin{proof}
    For every $i\in J_X(G)$, we may assume that $|V(C_i)|=c_i$ is odd, i.e. there exists integer $k_i\geq1$ such that $c_i=2k_i+1$. Let $|X|=x$. Then (\ref{eq: TB inequaulity}) can be rewritten as $$x+\sum_{i\in J_X(G)}\lf\frac{2k_i+1}{2}\rf=x+\sum_{i\in J_X(G)}k_i\leq s.$$
    Then we have $\sum_{i\in J_X(G)}k_i\leq s-x$, and
    $$|X|+\sum_{i\in J_X(G)}|V(C_i)|\leq x+3\sum_{i\in J_X(G)}k_i\leq x+3(s-x)\leq 3s.$$
    Thus  $|I_X(G)|=n-|X|-\sum_{i\in J_X(G) }|V(C_i)|\geq n-3s.$
\end{proof}

Now let $G\in Ex(n,K_r,\{C_{\ge a},M_{s+1}\})$ and $a_0=\lfloor\frac{a-1}{2}\rfloor$ with $a\ge 5$, $a_0\ge r\ge 2$ and $s\ge \lceil\frac{a}{2}\rceil$.  In the following discussion, we assume that $n$ is large enough. Since $H_{n,a,a_0}$ is $\{C_{\ge a},M_{s+1}\}$-free when $s\ge \lceil\frac{a}{2}\rceil$, we have
\begin{equation}\label{eq1: lower bound of excoddm}
    N_r(G)\geq  N_r(H_{n,a,a_0})={a-a_0\choose r}+(n-a+a_0){a_0\choose r-1}.
\end{equation}
Let $X$ be a vertex set satisfying (\ref{eq: TB inequaulity}) in Lemma \ref{thm: TB theorem}.
We have the following results.

\begin{lemma}\label{lem: the main block in G}     There exist $S\subseteq X$ with $|S|=a_0$ and $Y\subseteq I_X(G)$ with $|Y|=\Theta(n)$ such that $G[S\cup \{y\}]=K_{a_0+1}$ for any $y\in Y$.
\end{lemma}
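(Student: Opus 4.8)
The plan is to count copies of $K_r$ in $G$ according to how they meet the large independent set $I_X(G)$ and to play this count off against the lower bound coming from $H_{n,a,a_0}$. Since every $y\in I_X(G)$ is isolated in $G-X$, the set $I_X(G)$ is independent and $N(y)\subseteq X$ for each such $y$; hence no copy of $K_r$ meets $I_X(G)$ in more than one vertex. Writing $W:=X\cup\bigcup_{i\in J_X(G)}V(C_i)$, so that $|W|\le 3s$ by Lemma \ref{lem: almost all vertices independent}, every copy of $K_r$ either lies in $G[W]$ or is the union of a single $y\in I_X(G)$ with an $(r-1)$-clique inside $N(y)\subseteq X$. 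This gives the exact identity $N_r(G)=N_r(G[W])+\sum_{y\in I_X(G)}d^r_X(y)$ with $N_r(G[W])\le {3s\choose r}=O(1)$, and combining it with $N_r(G)\ge N_r(H_{n,a,a_0})$ from (\ref{eq1: lower bound of excoddm}) yields $\sum_{y\in I_X(G)}d^r_X(y)\ge {a_0\choose r-1}n-O(1)$.

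Next I would group the vertices of $I_X(G)$ by neighbourhood type: for $N_0\subseteq X$ set $Y_{N_0}=\{y\in I_X(G):N(y)=N_0\}$. There are at most $2^{|X|}\le 2^{s}$ types, they partition $I_X(G)$, and all $y\in Y_{N_0}$ share the same value $d^r_X(y)=c(N_0)$, the number of $(r-1)$-cliques of $G[N_0]$. The crucial observation is a cycle obstruction: if $|Y_{N_0}|\ge |N_0|$, then $N_0$ together with $|N_0|$ vertices of $Y_{N_0}$ spans a complete bipartite graph $K_{|N_0|,|N_0|}$, which contains a cycle of length $2|N_0|$; since $G$ is $C_{\ge a}$-free this forces $2|N_0|\le a-1$, i.e. $|N_0|\le a_0$. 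Thus every such ``good'' type satisfies $c(N_0)\le {|N_0|\choose r-1}\le {a_0\choose r-1}$, with equality only when $|N_0|=a_0$ and $G[N_0]=K_{a_0}$. The remaining ``tiny'' types (those with $|Y_{N_0}|<|N_0|\le s$) together contain $O(1)$ vertices and contribute $O(1)$ to the sum.

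It then remains to run a tightness argument. From the previous two paragraphs, $\sum_{\text{good }N_0}|Y_{N_0}|\,c(N_0)\ge {a_0\choose r-1}n-O(1)$, while $\sum_{\text{good }N_0}|Y_{N_0}|\le n$ and $c(N_0)\le {a_0\choose r-1}$ throughout; hence the total deficiency $\sum_{\text{good }N_0}|Y_{N_0}|({a_0\choose r-1}-c(N_0))$ is $O(1)$. Consequently the good types with $c(N_0)<{a_0\choose r-1}$ account for only $O(1)$ vertices, so the good types with $c(N_0)={a_0\choose r-1}$ (equivalently $G[N_0]=K_{a_0}$) cover all but $O(1)$ of $I_X(G)$. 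As there are boundedly many types, at least one such type $N_0^{\ast}$ has $|Y_{N_0^{\ast}}|=\Theta(n)$; taking $S=N_0^{\ast}$ and $Y=Y_{N_0^{\ast}}$ gives $|S|=a_0$, $|Y|=\Theta(n)$, and $G[S\cup\{y\}]=K_{a_0+1}$ for every $y\in Y$, as required.

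The main obstacle is forcing the common neighbourhood to have size exactly $a_0$. The complete-bipartite cycle trick supplies the upper bound $|N_0|\le a_0$ cheaply, but the matching lower bound is only extracted from the precise leading coefficient ${a_0\choose r-1}$ of the $H_{n,a,a_0}$ bound; one must therefore be careful that all discarded quantities (namely $N_r(G[W])$, the constant terms of $N_r(H_{n,a,a_0})$, and the tiny-type contribution) are genuinely $O(1)$, independent of $n$, so that the deficiency estimate closes once $n$ is large.
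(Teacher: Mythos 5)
Your argument follows essentially the same route as the paper's: count the copies of $K_r$ through $I_X(G)$ against the lower bound $N_r(H_{n,a,a_0})={a_0\choose r-1}n+O(1)$, use the $K_{t,t}\supseteq C_{2t}$ obstruction to force $|N(y)\cap X|\le a_0$ for all but $O(1)$ of the $y\in I_X(G)$, run a deficiency/tightness argument, and finish by pigeonhole over the boundedly many possible neighbourhoods in $X$. Your version is in fact organized more carefully than the paper's (the paper asserts the existence of the cliques $S_y$ directly from the count and only afterwards records the degree bound), and for $r\ge 3$ it is complete: there $c(N_0)={a_0\choose r-1}$ together with $|N_0|\le a_0$ does force $|N_0|=a_0$ and every pair of vertices of $N_0$ to lie in an $(r-1)$-clique, hence $G[N_0]=K_{a_0}$.

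The one step that fails is your parenthetical ``equivalently $G[N_0]=K_{a_0}$'' in the case $r=2$, which the statement's hypotheses allow ($a_0\ge r\ge 2$) and which the paper needs for Theorem \ref{cor: ex(c2k)}. For $r=2$ one has $c(N_0)=|N_0|$, so equality in $c(N_0)\le{a_0\choose 1}$ only yields $|N_0|=a_0$ and says nothing about edges inside $N_0$; the graph $K_{a_0,\,n-a_0}$ shows that the leading term ${a_0\choose r-1}n$ alone cannot distinguish a clique neighbourhood from an independent one. To close this for $r=2$ you must invoke extremality of $G$ once more: if $u,v\in N_0^{\ast}$ were nonadjacent, then $G+uv$ is still $C_{\ge a}$-free (any long cycle through $uv$ could be rerouted as $u\text{--}y\text{--}v$ through one of the $\Theta(n)$ common neighbours $y$ off the cycle, producing a long cycle already in $G$) and still has matching number at most $s$ (a matching edge $uv$ can be replaced by $uy$ for an uncovered $y\in Y_{N_0^{\ast}}$), contradicting $e(G)=ex(n,\{C_{\ge a},M_{s+1}\})$. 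To be fair, the paper's own proof is silent on exactly this point, so the gap is inherited rather than introduced; but as written your equivalence is false at $r=2$ and needs the extra edge-addition argument.
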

\begin{proof}
        By Lemma \ref{lem: almost all vertices independent},  the number of copies of $K_r$ in $G$ which do not contain any vertices in $I_X(G)$ is $O_{r,s}(1)$. From (\ref{eq1: lower bound of excoddm}), the number of copies of $K_r$ in $G$ is at least ${a_0\choose r-1}n$. So
        there is $Y\subseteq I_X(G)$ with $|Y|=\Theta(n)$ such that for any $y\in Y$, there is $S_y\subseteq X$ with $|S_y|\ge a_0$ and $G[S_y\cup \{y\}]$ is a clique when $n$ is large enough. For every $t\geq a_0+1$, if there exist more than $t$ vertices connecting $t$ vertices of $X$, then we can find a cycle of length more than $2t\ge a$, a contradiction. Thus the number of vertices in $I_X(G)$ connecting more than $a_0$ vertices of $X$ is $O_{s,k}(1).$

    So there are $\Theta(n)$ vertices in $I_X(G)$ connecting a clique in $X$ with size exactly $a_0$. Since there are at most ${|X|\choose a_0}$ subsets of $X$ with size $a_0$, according to the Pigeonhole Principal, we are done.
\end{proof}

Let $S_0$ be the subset given by Lemma \ref{lem: the main block in G} in the following discussion. If $G$ has an isolated vertex, say $x$, then let $G'=G+\sum_{s\in S_0}xs$. If $G$ have two components which have at least two vertices, say $G_1$ and $G_2$, let $G'=G-\sum_{x\in N_{G_1}(x_1)}x_1x+\sum_{x\in N_{G_1}(x_1)}x_2x+\sum_{s\in S_0}x_1s$, where $x_i\in V(G_i)$ for $i=1,2$ and $S_0\cap V(G_1)=\emptyset$. In each case, we have $G'$ is $\{C_{\ge a},M_{s+1}\}$-free, but $N_r(G')>N_r(G)$, a contradiction with $G\in Ex(n,K_r,\{C_{\ge a},M_{s+1}\})$. So  we have that $G$ is connected.

We choose  the block containing $S_0$  as the first block $B_1$ in the decomposition of $G$ to construct the star graph $St(G,B_1,s_1)$ of $G$, where
 $s_1\in S_0$. Then  $\nu(B_1)\ge a_0$.
Also, $s_1$ is covered by each maximum matching of $St(G,B_1,s_1)$.

\begin{lemma}\label{lem: G=St(G)} We have
     $St(G,B_1,s_1)\in Ex(n,K_r,\{C_{\ge a},M_{s+1}\})$.
\end{lemma}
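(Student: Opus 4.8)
The goal is to show that replacing $G$ by its star graph $St(G,B_1,s_1)$ preserves membership in $Ex(n,K_r,\{C_{\ge a},M_{s+1}\})$, so the plan splits into three tasks: verifying that $St(G,B_1,s_1)$ is $\{C_{\ge a},M_{s+1}\}$-free, that it has at least as many copies of $K_r$ as $G$, and that it still has $n$ vertices. The last is immediate since the star operation only re-routes edges incident to the cut vertices $u_i$ onto $s_1$ and never deletes or adds vertices.

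For the matching condition, I would invoke Lemma~\ref{lem: the matching number of star graph}. Suppose for contradiction that $St(G,B_1,s_1)$ has a matching $M$ of size $s+1$. If $s_1$ is uncovered by $M$, or if $s_1$ is matched to a vertex of $B_1$, the lemma hands us a matching of the same size $s+1$ inside $G$, contradicting $\nu(G)\le s$. The remaining case is that $s_1$ is matched by $M$ to some vertex lying outside $B_1$; I would argue this cannot happen, or can be rerouted. Here the key structural fact is that in $St(G,B_1,s_1)$ every block other than $B_1$ is attached to the rest of the graph only through $s_1$, and $s_1$ plays the role of the single cut vertex $u_1$. One shows that any matching can be modified, without decreasing its size, so that $s_1$ is either uncovered or matched inside $B_1$: if $s_1$ is matched to a vertex $v$ in some pendant block $B_i$, then because $B_i$ was originally a block of $G$ containing its own cut vertex, one can re-select the matched edge of $B_i$ to avoid $s_1$ while covering at least as many vertices, again landing in the hypotheses of Lemma~\ref{lem: the matching number of star graph}. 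This case analysis is where I expect the main subtlety to lie.

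For the $C_{\ge a}$-free condition, the crucial observation is that any cycle in $St(G,B_1,s_1)$ must lie entirely within a single block, since distinct blocks meet only at the cut vertex $s_1$ and a cycle cannot pass through a cut vertex twice. Each block $B_i$ of the star graph is isomorphic (as an abstract graph) to a block of the original graph $G$, because the star operation merely relocates the attachment point from $u_i$ to $s_1$ without altering the internal edge set of any block. Since $G$ is $C_{\ge a}$-free, every block of $G$, hence every block of $St(G,B_1,s_1)$, contains no cycle of length at least $a$; therefore $St(G,B_1,s_1)$ is $C_{\ge a}$-free.

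Finally, for the count of $K_r$'s, the same block-decomposition reasoning applies: every copy of $K_r$ in a graph lies within a single block, since a clique is $2$-connected (for $r\ge 2$, noting $K_2$ trivially lies in one block) and cannot span a cut vertex. As the multiset of blocks of $St(G,B_1,s_1)$ is isomorphic to that of $G$, we get $N_r(St(G,B_1,s_1))=N_r(G)$. Combining the three facts, $St(G,B_1,s_1)$ is an $n$-vertex $\{C_{\ge a},M_{s+1}\}$-free graph achieving $N_r(G)=ex(n,K_r,\{C_{\ge a},M_{s+1}\})$ copies of $K_r$, so $St(G,B_1,s_1)\in Ex(n,K_r,\{C_{\ge a},M_{s+1}\})$ as claimed. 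The one point demanding genuine care is the matching argument of the second paragraph, ensuring Lemma~\ref{lem: the matching number of star graph} can always be applied after a harmless rerouting of the edge at $s_1$.
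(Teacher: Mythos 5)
Your overall architecture matches the paper's: the $C_{\ge a}$-freeness and the equality $N_r(St(G,B_1,s_1))=N_r(G)$ both follow because the star operation replaces each block by an isomorphic copy re-attached at $s_1$, and every cycle and every copy of $K_r$ lives inside a single block; the paper dismisses these two points as ``easy to check,'' and your justification of them is correct. The matching bound is likewise reduced, as in the paper, to Lemma~\ref{lem: the matching number of star graph}. The gap is exactly where you flag it: the case of a maximum matching $M$ of $St(G,B_1,s_1)$ with $s_1u\in M$ and $u\notin V(B_1)$. Your proposed repair --- re-select the matched edges of the pendant block containing $u$ so as to avoid $s_1$ ``while covering at least as many vertices'' --- fails as stated: if $M$ restricted to that block is a perfect matching of the block (already for a block isomorphic to $K_2$, where $s_1u$ is the only available edge), every re-selection inside the block that avoids $s_1$ loses an edge, so you cannot reach the hypotheses of Lemma~\ref{lem: the matching number of star graph} without compensating elsewhere. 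Nothing you have written supplies that compensation.

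The missing ingredient is Lemma~\ref{lem: the main block in G}: it provides $Y\subseteq I_X(G)\cap V(B_1)$ with $|Y|=\Theta(n)$, every vertex of which is adjacent to all of $S_0\ni s_1$. Since $Y$ is independent and $N(Y)\subseteq X$ with $|X|\le s$ (this is essentially unchanged by the star operation, as $s_1\in S_0\subseteq X$), any matching can cover at most $|X|\le s<|Y|$ vertices of $Y$, so some $x\in Y$ is unmatched by $M$. Replacing $s_1u$ by $s_1x$ yields a matching of the same size in which $s_1$ is matched inside $B_1$, and Lemma~\ref{lem: the matching number of star graph} then transfers it to $G$, giving $\nu(St(G,B_1,s_1))\le\nu(G)\le s$. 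This is precisely the paper's move; the repair must be made in $B_1$ (where $s_1$ has many unmatched neighbours), not inside the pendant block.
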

\begin{proof}
   First, we claim that  $\nu(St(G,B_1,s_1))\le \nu(G)$. By Lemma \ref{lem: the matching number of star graph}, we only need to consider the case that there exists a maximum matching $M$  of $St(G,B_1,s_1)$, such that  $s_1u\in M$ and $u\not\in V(B_1)$. By Lemma \ref{lem: the main block in G}, we can choose a vertex $x\in I_X(G)\cap V(B_1)$ such that $(M\setminus\{s_1u\})\cup\{s_1x\}$ is a maximum matching of $St(G,B_1,s_1)$ and we are done by Lemma \ref{lem: the matching number of star graph}.

    It is easy to check that  $St(G,B_1,s_1)$ is $C_{\ge a}$-free and $N_r(G)=N_r(St(G,B_1,s_1))$. So we finish the proof.
\end{proof}

By Lemma \ref{lem: G=St(G)},  we will assume $G=St(G,B_1,s_1)$ in the next discussion. Then  $s_1$ is a vertex cut of $G$ separating $G$ into blocks $B_i$, $i=1,\ldots,h$ if $V(G)\setminus V(B_1)\not=\emptyset$. Let $A\subseteq V(G)$ and $v\notin A$. We define $G[v\to A]$, call \textbf{switch $v$ to $A$}, to be the
graph obtained from $G$ by deleting all edges joining with $v$ and adding new edges connecting  $v$ with every vertex in $A$.
Note that $G[v\to S_0]$ is still $\{C_{\ge a},M_{s+1}\}$-free if $v\in V(G)\setminus V(B_1)$. If there is $v\in V(G)\setminus V(B_1)$ such that $d^r(v)\le {a_0\choose r-1}$, then
$$N_r(G[v \to S_0])=N_r(G)-d^r(v)+{a_0\choose r-1}\ge N_r(G).$$ So we will assume $d^r(v)\ge {a_0\choose r-1} +1$ for any $v\in V(G)\setminus V(B_1)$ if $V(G)\setminus V(B_1)\not=\emptyset$.  Hence $I_X(G)\subseteq V(B_1)$ and if $V(G)\setminus V(B_1)\not=\emptyset$, we have $d_{B_i}(v)=d(v)\geq a_0+1$ for each $v\in V(G)\setminus V(B_1)$ by $a_0\ge r\ge 2$.
Recall $a_0=\lfloor\frac{a-1}{2}\rfloor$.

\begin{lemma}\label{lem: no block with size geq 2k+1}
  If $V(G)\setminus V(B_1)\not=\emptyset$, then for any $i\geq 2$,  $a_0+2\le v(B_i)\le a-1$.
\end{lemma}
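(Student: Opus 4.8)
The plan is to prove the two inequalities separately, the lower bound being immediate and the upper bound requiring the cycle-length theorems quoted above. For the lower bound $v(B_i)\ge a_0+2$, fix any vertex $v\in V(B_i)\setminus\{s_1\}$. Since $s_1$ is the unique cut vertex of $G=St(G,B_1,s_1)$, the vertex $v$ lies in no block other than $B_i$, so all its neighbours lie in $B_i$ and $d_{B_i}(v)=d_G(v)\ge a_0+1$ by the degree assumption recorded just before the lemma. Hence $B_i$ has at least $d_{B_i}(v)+1\ge a_0+2$ vertices.

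For the upper bound I would argue by contradiction: assume $v(B_i)\ge a$ and produce a cycle of length at least $a$ in $B_i$, contradicting that $G$, hence $B_i$, is $C_{\ge a}$-free. The only obstruction to applying the degree-based cycle theorems is the single vertex $s_1$, whose degree in $B_i$ is not controlled (we only know $d_{B_i}(s_1)\ge 2$ from $2$-connectivity), so the device I would use to neutralise it is a longest path $P=u_0u_1\cdots u_p$ of $B_i$. If $s_1$ happens to be an endpoint, say $s_1=u_0$, then since $d_{B_i}(s_1)\ge 2$ and all its neighbours lie on $P$ (by maximality of $P$), it has a neighbour $u_j$ with $j\ge 2$, and rotating along the edge $u_0u_j$ produces another longest path whose two endpoints are $u_{j-1}$ and $u_p$, both different from $s_1$. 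Thus I may assume both endpoints of $P$ satisfy $d_{B_i}(u_0),d_{B_i}(u_p)\ge a_0+1$ with all their neighbours on $P$. With both endpoints of high degree, the standard Dirac-type closing argument applies: among the indices $i\in\{0,\dots,p-1\}$, the endpoint $u_0$ supplies at least $a_0+1$ values with $u_0u_{i+1}\in E(B_i)$ and $u_p$ supplies at least $a_0+1$ values with $u_iu_p\in E(B_i)$; since $(a_0+1)+(a_0+1)=2a_0+2$ exceeds $p$ whenever $|V(P)|=p+1\le 2a_0+2$, some index is counted twice and yields a cycle spanning $V(P)$. Invoking $2$-connectivity and the maximality of $P$ to forbid a spanning cycle of $V(P)$ that is not already Hamiltonian in $B_i$, this gives the bound that the circumference of $B_i$ is at least $\min\{v(B_i),\,2a_0+2\}$. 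As $2a_0+2\ge a$ in both parities and $v(B_i)\ge a$ by assumption, the circumference is at least $a$, the desired contradiction. Formally this is where Theorem~\ref{thm: degree geq k, vertices geq 2k} and Theorem~\ref{thm: deg geq k but a single vertex, v leq 2k-1} enter, with $k=a_0+1$: Dirac's theorem disposes of $v(B_i)\ge 2a_0+2$, while the Erd\H{o}s--Gallai path theorem is tailored to the boundary value $v(B_i)=a=2a_0+1$ in the odd case.

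I expect the main obstacle to be precisely this boundary case. When $a$ is odd and $v(B_i)=a=2a_0+1<2a_0+2$, Dirac's threshold $v\ge 2k$ is not met, so one must instead force a Hamiltonian cycle; the rotation device reduces the task to closing a Hamiltonian path whose two endpoints have degree $\ge a_0+1>v(B_i)/2$, which is exactly the pigeonhole step above, but the details must be handled by hand since we cannot simply quote a clean $\min\{v,2\delta\}$ statement that presupposes a global minimum-degree bound we do not have at $s_1$. Making rigorous the interplay between the longest-path rotation (to move $s_1$ off the endpoints), the pigeonhole closing, and the extension step that converts a cycle on $V(P)$ into the full circumference bound is the delicate part of the argument, and it is where I would spend most of the care.
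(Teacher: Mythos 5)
Your lower bound is exactly the paper's one-line argument and is fine. The upper bound, however, has a genuine gap at the step where you pass from the longest-path crossing argument to the claim that the circumference of $B_i$ is at least $\min\{v(B_i),2a_0+2\}$. The rotation-plus-pigeonhole device closes a cycle through all of $V(P)$ only when $|V(P)|\le 2a_0+2$; if the two index sets are disjoint you learn merely that $p\ge 2a_0+2$, i.e.\ that $P$ is long, and a long path in a $2$-connected graph does not yield a comparably long cycle (the best general bounds give only a constant fraction of the path length). You cannot fall back on Theorem~\ref{thm: degree geq k, vertices geq 2k} to ``dispose of $v(B_i)\ge 2a_0+2$'' either, because that theorem requires \emph{every} vertex to have degree at least $k$, and $d_{B_i}(s_1)$ is only known to be at least $2$; moving $s_1$ off the endpoints of a longest path does not make Dirac's hypothesis true, since his circumference bound is not proved by the endpoint pigeonhole alone. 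Nor does an edge count rescue you: the degree hypothesis gives only about $v(B_i)(a_0+1)/2$ edges, which for large $v(B_i)$ is below the Kopylov--Woodall threshold $f_2(v(B_i),a,a_0)\approx a_0\,v(B_i)$ needed to force a cycle of length at least $a$. So the case of a block with $v(B_i)\ge a$ whose longest path exceeds $2a_0+2$ vertices is left unhandled, and this is precisely the case the lemma must exclude.

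The paper's own proof takes a different route at exactly this point: it uses Theorem~\ref{thm: degree geq k, vertices geq 2k} to force $v(B_2)=a$ and a Hamiltonian cycle in $B_2-s_1$, so that $B_2$ is a $2$-connected graph on $a$ vertices with circumference $a-1$, and then invokes Luo's Theorem~\ref{thm: 2-connected without Ck, count Kr} to conclude that $B_2$ may be assumed isomorphic to $H_{a,a,2}$ or $H_{a,a,a_0}$, each of which has at least two vertices of degree less than $a_0+1$, contradicting the degree assumption. The missing ingredient in your proposal is this appeal to the extremal (Kopylov/Woodall/Luo) characterization of $2$-connected graphs with small circumference, which is what carries the argument in the regime where the elementary rotation--pigeonhole reasoning stops short; to repair your write-up you would need either to supply that appeal or to give a genuine proof of a one-exceptional-vertex version of Dirac's circumference theorem.
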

\begin{proof} Since $d_{B_i}(v)=d(v)\geq a_0+1$ for each $v\in V(G)\setminus V(B_1)$, we have $v(B_i)\ge a_0+2$.

     Suppose there is $i$, say $i=2$, such that $v(B_2)\ge a$. Recall for every $v\in V(B_2)\setminus\{s_1\}$,  $d_{B_2}(v)=d(v)\geq a_0+1$.
    By Theorem \ref{thm: degree geq k, vertices geq 2k} and $G$ being $C_{\ge a}$-free, we have $|V(B_2)\setminus\{s_1\}|=a-1$ and $G[V(B_2)\setminus\{s_1\}]$ has a Hamiltonian cycle. So $v(B_2)=a$ and the number of the maximum matching of $G$ contained in $B_2$ is  $a_0$ because $s_1$ is covered by each maximum matching in $B_1$. Since $G$ is $\{C_{\ge a},M_{s+1}\}$-free, the circumference of $B_2$ is $a-1$.
    Then by Theorem \ref{thm: 2-connected without Ck, count Kr}, we have that $$N_r(B_2)\leq \max\left\{{a-2\choose r}+2{2\choose r-1},{a-a_0\choose r}+a_0{a_0\choose r-1}\right\},$$
    and the extremal graph is either $H_{a,a,2}$ or $H_{a,a,a_0}$. Note that both of these graphs are $C_{\ge a}$-free with matching number  $a_0$, so we may assume that $B_2$ is isomorphic to one of the extremal graphs. But in these two graphs, there are at least two vertices with degrees less than $a_0+1$, a contradiction with our assumption.
\end{proof}

\begin{lemma}\label{lem: all blocks are clique}
   If $V(G)\setminus V(B_1)\not=\emptyset$, we may assume $B_i$ is a clique  for any $2\leq i\leq h$. Moreover $v(B_i)$ is even if $v(B_i)\le a-2$ for any $2\leq i\leq h$.
\end{lemma}
\begin{proof}
Note that $d_{B_i}(v)=d(v)\geq a_0+1$ for each $v\in V(G)\setminus V(B_1)$. By Lemma \ref{lem: no block with size geq 2k+1}, $v(B_i)\le a-1\le 2a_0-1$. By Theorem \ref{thm: deg geq k but a single vertex, v leq 2k-1}, there exists a Hamiltonian path connecting $s_1$. Since $s_1$ is covered by each maximum matching in $B_1$,
      $\nu(B_i)= \lf (v(B_i)-1)/2\rf$ for $2\le i\le h$. So we can replace $B_i$ with a complete graph of the same size, and the number of $K_r$ will not decrease.

      Suppose there is $i$, say $i=2$, such that $v(B_2)$ is odd and $v(B_2)\le a-2$. Let $v\in I_X(G)$. Note that $s_1$ is covered by each maximum matching in $B_1$. Then $G[v\to B_2]$ is $\{C_{\ge a},M_{s+1}\}$-free. But
    $$N_r(G[v\to B_2])-N_r(G)={v(B_2)+1\choose r}-{v(B_2)\choose r}-{a_0\choose r-1}={v(B_2)\choose r-1}-{a_0\choose r-1}>0$$ by $v(B_2)\ge a_0+2$, a contradiction.
\end{proof}


 \begin{lemma}\label{new2}If $V(G)\setminus V(B_1)\not=\emptyset$, then
  $|\{i ~|~  a_0+1\le v(B_i)\le a-2-\varepsilon,2\le i\le h\}|\le 1$, where $\varepsilon=0$ if $a$ is odd; else $\varepsilon=1$.
\end{lemma}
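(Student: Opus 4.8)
The plan is to argue by contradiction via a local switching operation that concentrates vertices into a larger clique, powered by the convexity of $\binom{x}{r}$. Suppose there were two indices with $2\le i<j\le h$ and $a_0+1\le v(B_i)\le v(B_j)\le a-2-\varepsilon$. By Lemma \ref{lem: no block with size geq 2k+1} every block $B_\ell$ with $\ell\ge 2$ already satisfies $v(B_\ell)\ge a_0+2$, so the constraint is really $a_0+2\le v(B_i)\le v(B_j)\le a-2-\varepsilon$, and since both orders are at most $a-2$, Lemma \ref{lem: all blocks are clique} makes $B_i,B_j$ cliques of \emph{even} order. I would write $v(B_i)=2p\le 2q=v(B_j)$.

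First I would record the two bookkeeping identities the transformation must respect. Because $s_1$ is covered by every maximum matching of $B_1$, each block $B_\ell$ ($\ell\ge 2$) contributes exactly $\lfloor (v(B_\ell)-1)/2\rfloor$ to the matching number, so that
$$\nu(G)=\nu(B_1)+\sum_{\ell=2}^{h}\left\lfloor\frac{v(B_\ell)-1}{2}\right\rfloor ,$$
and since $s_1$ is the unique cut vertex, every copy of $K_r$ lies inside a single block, giving $N_r(G)=N_r(B_1)+\sum_{\ell\ge 2}\binom{v(B_\ell)}{r}$. The operation I would apply is to move two vertices of $V(B_i)\setminus\{s_1\}$ into $B_j$ (two successive switches of the form $G[v\to V(B_j)]$), producing $G'$ in which $B_i$ shrinks to a clique of order $2p-2$, $B_j$ grows to a clique of order $2q+2$, and every other block is untouched.

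Next I would check that $G'$ is still admissible. The matching contribution of $B_i$ drops from $p-1$ to $p-2$ while that of $B_j$ rises from $q-1$ to $q$, so the sum in the displayed identity is unchanged and $\nu(G')=\nu(G)\le s$; thus $G'$ is $M_{s+1}$-free. For $C_{\ge a}$-freeness the only thing to verify is that the enlarged block still has order at most $a-1$, and this is exactly where the parameter $\varepsilon$ enters: since $2q\le a-2-\varepsilon$ with $2q$ even, a parity check gives $2q\le a-3$ in \emph{both} parities of $a$, hence $2q+2\le a-1$. As $G'$ is then a union of cliques of order at most $a-1$ glued at $s_1$, its longest cycle lies in one clique and has length at most $a-1$, so $G'$ is $C_{\ge a}$-free. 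Finally, only $B_i,B_j$ change the clique count, so
$$N_r(G')-N_r(G)=\left[\binom{2q+2}{r}-\binom{2q}{r}\right]-\left[\binom{2p}{r}-\binom{2p-2}{r}\right],$$
and since $\binom{x+2}{r}-\binom{x}{r}=\binom{x+1}{r-1}+\binom{x}{r-1}$ is strictly increasing in $x$ for $x\ge r-1$, while $2q\ge 2p>2p-2\ge a_0\ge r$, this difference is strictly positive. Thus $N_r(G')>N_r(G)=N_r(St(G,B_1,s_1))$ with $G'$ still $\{C_{\ge a},M_{s+1}\}$-free on $n$ vertices, contradicting $G\in Ex(n,K_r,\{C_{\ge a},M_{s+1}\})$.

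The one genuinely delicate point, and the step I would write most carefully, is the parity bookkeeping guaranteeing $2q+2\le a-1$; this is the sole place where the even/odd distinction in $a$ matters and is precisely what forces the asymmetric cutoff $a-2-\varepsilon$. The matching identity and the convexity inequality are routine by comparison, so I expect essentially all of the care to go into the size/parity check and into confirming that the switch preserves the "cliques glued at $s_1$" structure (so that no long cycle or cross-block $K_r$ is inadvertently created).
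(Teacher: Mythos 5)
Your proposal is correct and follows essentially the same route as the paper: assume two such blocks exist, move two non-cut vertices from the smaller into the larger, and derive $N_r(G')>N_r(G)$ from the convexity of $\binom{x}{r}$, with the parity of the block orders (from Lemma \ref{lem: all blocks are clique}) guaranteeing the enlarged clique stays below order $a$; your extra verifications of the matching identity and $C_{\ge a}$-freeness are details the paper leaves implicit. The only nitpick is that two switches $G[v\to V(B_j)]$ alone do not create the edge $v_1v_2$, so that edge must be added explicitly (as the paper does) for the enlarged block to be the clique $K_{2q+2}$ used in your count.
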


\begin{proof} Suppose there are $i,j\in\{2,\ldots,h\}$, say $i=2$ and $j=3$, such that $a_0+1\le v(B_i)\le a-2-\varepsilon$ for $i=2,3$. Assume $v(B_2)\le v(B_3)$. By Lemma \ref{lem: all blocks are clique}, we have  $v(B_3)\le a-3-\varepsilon$.
Let $v_1,v_2\in V(B_2)\setminus \{s_1\}$. Then we switch $v_1$ and $v_2$ to $B_3$ respectively and add an edge connecting $v_1$ and $v_2$. We denote the obtained graph by $G'$. Then $G'$ is $\{C_{\ge a},M_{s+1}\}$-free. Since
     $$           {v(B_2)\choose r}+{v(B_3)\choose r}< {v(B_2)-2\choose r}+{v(B_3)+2\choose r},
       $$
we have $N_r(G')>N_r(G)$, a contradiction.
\end{proof}

\section{Proof of Theorem \ref{thm: exodd when n large enough}}\label{sec: exodd}

In this Section, we prove Theorem \ref{thm: exodd when n large enough}.
We will use the same notations (for example, $S_0$, $B_1,\ldots,B_h$, $I_X(G)$, etc) as that in Section 2. Then $a=2k+1$ and $a_0=k$.   First, we prove the upper bound of $\excoddm$  and the construction of the lower bound will be given in the proof.

Let $G\in Ex(n,K_r,\{C_{\ge 2k+1},M_{s+1}\})$. Moreover, we assume $G$ has the least number of blocks among all graphs in
$Ex(n,K_r,\{C_{\ge 2k+1},M_{s+1}\})$. By the discussion in Section 2, we have $G=St(G,B_1,s_1)$, and for any $i\geq 2$, $B_i$ is a clique, $k+2\le v(B_i)\le 2k$ and $d_{B_i}(v)=d(v)\geq k+1$ for every $v\in V(B_i)\setminus \{s_1\}$ if $V(G)\setminus V(B_1)\not=\emptyset$.

Note that $B_1$ is 2-connected and $\{C_{\ge 2k+1},M_{s+1}\}$-free.
According to Corollary \ref{cor: ex(n,K_r) for 2-connected graph}, $N_r(B_1)\leq f_r(v(B_1),2k+1,k)$ and the equality holds if and only if $B_1\cong H_{v(B_1),2k+1,k}$. Since   $\nu(B_1)\ge k$ and  $\nu(H_{v(B_1),2k+1,k})=k$, we may assume $B_1\cong H_{v(B_1),2k+1,k}$. Then  $\nu(B_1)=k$ and
$$N_r(B_1)={k+1\choose r}+(v(B_1)-k-1){k\choose r-1}.$$

     Recall $\tau_{k,r}=\min\{k_0~|~k_0{k\choose r-1}< {k_0+1\choose r}\}$. We have the following result.

\begin{lemma}\label{new}
     If $2k\leq \tau_{k,r}$, then $h=1$ (i.e.,  $V(G)\setminus V(B_1)=\emptyset$). More over $N_r(G)\leq {k+1\choose r}+(n-k-1){k\choose r-1}$ and the equality holds if and only if  $ G\cong H_{n,2k+1,k}$.
\end{lemma}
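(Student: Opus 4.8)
The plan is to exploit the star structure $G=St(G,B_1,s_1)$ together with the already-established shape $B_1\cong H_{v(B_1),2k+1,k}$ and to reduce everything to a single inequality comparing $\binom{m}{r}$ with $(m-1)\binom{k}{r-1}$, governed by the threshold $\tau_{k,r}$.

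First I would record the additive decomposition of the clique count. Since $s_1$ is the only vertex shared by all blocks and $r\ge 2$, every copy of $K_r$ lies inside exactly one block $B_i$ (any two of its vertices other than $s_1$ force all of them into a single block). Hence $N_r(G)=N_r(B_1)+\sum_{i=2}^h\binom{v(B_i)}{r}$, using that $B_i$ is a clique for $i\ge2$. Writing $m_i=v(B_i)$ and noting that each extra block contributes $m_i-1$ new vertices, we get $v(B_1)=n-\sum_{i\ge2}(m_i-1)$. Substituting $N_r(B_1)=\binom{k+1}{r}+(v(B_1)-k-1)\binom{k}{r-1}$ and collecting terms yields
$$N_r(G)=\left[\binom{k+1}{r}+(n-k-1)\binom{k}{r-1}\right]+\sum_{i=2}^h\left[\binom{m_i}{r}-(m_i-1)\binom{k}{r-1}\right].$$

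Next I would bound each summand. By Lemma \ref{lem: no block with size geq 2k+1} every block with $i\ge2$ satisfies $m_i\le 2k$, so $m_i-1\le 2k-1<\tau_{k,r}$ under the hypothesis $2k\le\tau_{k,r}$. By the defining minimality of $\tau_{k,r}$, any $k_0<\tau_{k,r}$ satisfies $k_0\binom{k}{r-1}\ge\binom{k_0+1}{r}$; applied with $k_0=m_i-1$ this says $\binom{m_i}{r}-(m_i-1)\binom{k}{r-1}\le 0$ for each block. Therefore $N_r(G)\le\binom{k+1}{r}+(n-k-1)\binom{k}{r-1}=:N_0$, which is the claimed upper bound. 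For the value and the structure I would then invoke extremality together with the minimality of the number of blocks: since $H_{n,2k+1,k}$ is $\{C_{\ge2k+1},M_{s+1}\}$-free and attains $N_0$ copies of $K_r$, we have $\excoddm\ge N_0$, and combined with $N_r(G)\le N_0$ this gives $\excoddm=N_0$ and shows $H_{n,2k+1,k}$ is itself extremal. As $H_{n,2k+1,k}$ is $2$-connected it is a single block, so the minimal-block extremal graph $G$ has $h=1$; then $G=B_1\cong H_{n,2k+1,k}$, and equality $N_r(G)=N_0$ holds exactly for this graph.

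The main obstacle I anticipate is the equality/uniqueness claim in the boundary case $2k=\tau_{k,r}$ (which always occurs when $r=2$, since $\tau_{k,2}=2k$): there a block of size exactly $2k$ makes the corresponding bracket vanish, so the count alone does not forbid extra $K_{2k}$ blocks, and genuinely different extremal graphs with the same $N_r$ do exist. This is precisely why the argument must lean on the hypothesis that $G$ was chosen with the fewest blocks—the competitor $H_{n,2k+1,k}$ being extremal with a single block pins $h$ down to $1$—rather than on strict inequality of the summands. I would therefore state the conclusion $h=1$ as a consequence of minimality, and, if a self-contained uniqueness statement is desired, supplement it with an absorption step that replaces a zero-contribution block of size $m_i$ by $m_i-1$ pendant vertices attached to $S_0$, checking that this preserves both the matching number and the $C_{\ge 2k+1}$-freeness while strictly decreasing the number of blocks.
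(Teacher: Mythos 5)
Your proposal is correct and follows essentially the same route as the paper: the paper's proof performs the switch of $V(B_2)\setminus\{s_1\}$ into $S_0$, uses the same key inequality $(m-1)\binom{k}{r-1}\ge\binom{m}{r}$ for $m-1\le 2k-1<\tau_{k,r}$, and derives a contradiction with the assumption that $G$ has the fewest blocks, which is just the local form of your global summation. Your remark about the boundary case $2k=\tau_{k,r}$ (where a $K_{2k}$ block contributes zero and uniqueness cannot follow from the count alone) correctly identifies why both arguments must lean on the minimal-block-count assumption, and your proposed absorption step is precisely the paper's switching operation.
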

\begin{proof} Suppose $h\ge 2$. Now we switch all vertices in $V(B_2)\setminus \{s_1\}$ to $S_0$ one by one. Denote the new graph by $G'$. Then $G'$ is  $\{C_{\ge 2k+1},M_{s+1}\}$-free and$$N_r(G')-N_r(G)=(v(B_2)-1){k\choose r-1}-{(v(B_2)-1)+1\choose r}.$$ By Lemma \ref{lem: no block with size geq 2k+1}, $ v(B_2)-1\le 2k-1<\tau_{k,r}$. Hence $N_r(G')-N_r(G)\geq 0$, but $G'$ has less blocks than $G$, a contradiction. So $h=1$ and then $G=B_1$. Thus the result holds.
\end{proof}

Now we are ready to prove Theorem \ref{thm: exodd when n large enough}.

\begin{reproof} By (the proof of) Lemma \ref{new}, we can assume $h\ge 2$ and $2k-1\geq \tau_{k,r}$.
    By Lemma \ref{lem: no block with size geq 2k+1}, $k+2\le v(B_i)\leq 2k$ for every $2\leq i\leq h$. By Lemma \ref{lem: all blocks are clique},  for any $2\leq i\leq h$, $B_i$ is a clique.

     By Lemma \ref{new2}, we assume $v(B_i)=2k$ for all $2\le i\le h-1$ and $k+2\le v(B_h)\le 2k$.
 Let $s = k+ q(k-1)+t$, where $0\le t\le k-2$. Since $\nu(G)=k+(h-2)(k-1)+\lf\frac{v(B_h)-1}{2}\rf$, we have
  \begin{equation}\label{matching}
  k+(h-2)(k-1)+\lf\frac{v(B_h)-1}{2}\rf\le k+ q(k-1)+t.
\end{equation}
 We finish the proof by considering the following two cases.

\begin{mycase}
    \case$ 2t+1<\tau_{k,r}$.

 We first prove that $h-2\leq q-1$. By (\ref{matching}), $h-2\le q$ and we just need to consider the case $t>0$. Suppose $h-2=q$. By (\ref{matching}), we have $v(B_h)\leq 2t+2\leq 2k-2$. We switch all vertices in $V(B_h)\setminus \{s_1\}$ to $S_0$ one by one. Denote the new graph by $G'$. Then $G'$ is $C_{\geq k}$-free  and  $\nu(G')=k+q(k-1)< s$. But
    $$N_r(G')-N_r(G)= (v(B_h)-1){k\choose r-1}-{v(B_h)\choose r}.$$
    Since $v(B_h)-1\leq 2t+1<\tau_{k,r}$,  we have $N_r(G')-N(G)\geq 0$ and $G'$ has less number of blocks than $G$, a contradiction. So $h-2\leq q-1$ holds.

     Then we claim that $v(B_h)=2k$. Suppose $v(B_h)\le 2k-1$. By Lemma \ref{lem: all blocks are clique}, $v(B_h)$ is even. Then $G[v\to B_h]$ with $v\in I_x(G)$ is $\{C_{2k+1},M_{s+1}\}$-free and
      $$N_r(G')-N_r(G)={v(B_h)\choose r-1}-{k\choose r-1}>0,$$  a contradiction. Hence $v(B_h)=2k$ and then
     we  have
    $$N_r(G)\leq q{2k\choose r}+{k+1\choose r}+(n-k-1-q(2k-1)){k\choose r-1}. $$
    The equality holds if and only if $h-1=q$. The extremal graph is $St(G,B_1,s_1)$ consisting of $q+1$ blocks, with $B_1\cong H_{n-q(2k-1),2k+1,k}$ and $q$ blocks being cliques with order $2k$.

    \case $2t+1\geq \tau_{k,r}.$

   By (\ref{matching}), we have $h-2\leq q.$
        We claim that $h-2=q$. Suppose $h-2\leq q-1$. Then we may assume $v(B_h)=2k$  otherwise we can switch $v\in I_X(G)$ to $B_h$ and get a graph with more copies of $K_r$ by the same argument as Case 1. Now $\nu(G)=k+q(k-1)$. Let $G'$ be the graph obtained from $G$ by deleting $2t+1$ vertices from $I_X(G)$ and adding a clique of order $2t+2$ that intersects $G$ with  $s_1.$ Then  $G'$ is still $C_{\geq k}$-free and $\nu(G')=k+(h-1)(k-1)+t\leq s$. Moreover,
    $$N_r(G')-N_r(G)={2t+2\choose r}-(2t+1){k\choose r-1}>0,$$
     a contradiction. Now $h-2=q$ which implies $v(B_h)\leq 2t+2$ and
    \begin{equation*}
        \begin{aligned}
            N_r(G)=& q{2k\choose r}+{v(B_h)\choose r}+{k+1\choose r}
            +(n-q(2k-1)-(v(B_h)-1)-k-1){k\choose r-1}\\
            \leq & q{2k\choose r}+{2t+2\choose r}+{k+1\choose r}+(n-q(2k-1)-2t-1-k-1){k\choose r-1}.
        \end{aligned}
    \end{equation*} The last equality holds by (\ref{eq: the convexity of k choose r}) and $2t+1\geq \tau_{k,r}$.
    The equality holds if and only if $v(B_h)= 2t+2$.
The extremal graph is $St(G,B_1,s_1)$ consisting of $q+2$ blocks with $B_1\cong H_{n-q(2k-1)-(2t+1),2k+1,k}$, $q$ blocks being  $K_{2k}$ and one block being  $K_{2t+2}$. Now we have finished the proof.
\end{mycase}
\end{reproof}

\section{Proof of Theorems \ref{thm: exeven when n large enough} and \ref{cor: ex(c2k)}}\label{sec: exeven}

In this Section, we prove Theorems \ref{thm: exeven when n large enough} and \ref{cor: ex(c2k)}.
We will use the same notations (for example, $S_0$, $B_1,\ldots,B_h$, $I_X(G)$, etc) as that in Section 2. Then $a=2k$ and $a_0=k-1$.   First, we prove the upper bound of $ex(n,K_r,\{C_{\ge 2k},M_{s+1}\})$  and the construction of the lower bound will be given in the proof.

Let $G\in Ex(n,K_r,\{C_{\ge 2k},M_{s+1}\})$.  By the discussion in Section 2, we have $G=St(G,B_1,s_1)$, and for any $i\geq 2$, $B_i$ is a clique, $k+2\le v(B_i)\le 2k-1$ and $d_{B_i}(v)=d(v)\geq k$ for every $v\in V(B_i)\setminus \{s_1\}$ if $V(G)\setminus V(B_1)\not=\emptyset$. Also $\nu(B_1)\ge k-1$.



  By Lemma \ref{new2}, we assume there are $x$ blocks of order $2k-1$, $y$ blocks of order $2k-2$ in $\{B_2,\ldots,B_{h-1}\}$ and  let $v(B_h)=z$.   Then $v(B_1)= n-x(2k-2)-y(2k-3)-(z-1)$.
  By Lemma \ref{lem: no block with size geq 2k+1}, we have $k+1\le z\le 2k-1$. Since our result is about a maximum problem,   we let $z\ge 1$. Then $z=1$ implies that  $B_h$ does not exist.

  \begin{reprooff}

  Note that $\nu(B_1)\ge k-1$. We complete the proof by considering two cases.


\begin{mycase}
    \case  $\nu(B_1)\ge k$.
 Since $B_1$ is 2-connected and $\{C_{\ge 2k},M_{s+1}\}$-free,
by Corollary \ref{cor: ex(n,K_r) for 2-connected graph}, $N_r(B_1)\leq f_r(v(B_1),2k,k-1)$ and the equality holds if and only if $B_1\cong H_{v(B_1),2k,k-1}$. Since $\nu(B_1)\ge k$ and  $\nu(H_{v(B_1),2k+1,k})=k$, we may assume $B_1\cong H_{v(B_1),2k,k-1}$. Then  $\nu(B_1)=k$ and
$$N_r(B_1)={k+1\choose r}+(v(B_1)-k-1){k-1\choose r-1}.$$

   Since $\nu(G)\le s$, we have that $k+x(k-1)+y(k-2)+\lf\frac{z-1}{2}\rf\leq s$.
  Above all, we have that
\begin{equation*}
    \begin{aligned}
    g_1(x,y,z,n):=N_r(G)=& x{2k-1\choose r}+y{2k-2\choose r}+{z\choose r}+{k+1\choose r}\\
        +&(n-x(2k-2)-y(2k-3)-(z-1)-k-1){k-1\choose r-1}.
    \end{aligned}
\end{equation*}
Note that $g_1(x,y,z,n)={k-1\choose r-1}n+g(x,y,z)$.
The optimal value of $g_1(x,y,z,n)$ under the constriction $k+x(k-1)+y(k-2)+\lf\frac{z-1}{2}\rf\leq s$ and $1\leq z\leq 2k-1$ is denoted as $M_1$. Every solution of $(x,y,z)$ responding to a  $\{C_{\geq 2k},M_{s+1}\}$-free graph.

\case  $\nu(B_1)=k-1$.
In this case, we have $\delta(B_1)\leq k-1$; otherwise, we can find a cycle of length at least $2k,$ a contradiction with $G$ being $C_{\ge 2k}$-free. By Theorem \ref{thm: duan thm}, we may assume $B_1\cong H_{v(B_1),2k-1,k-1}$ and then $N_r(B_1)={k+1\choose r}-{k-1\choose r-2}+(v(B_1)-k-1){k-1\choose r-1}$.
Since $\nu(G)\le s$, we have that $k-1+x(k-1)+y(k-2)+\lf\frac{z-1}{2}\rf\leq s$.  Above all, we have that
\begin{equation*}
    \begin{aligned}
    g_2(x,y,z,n):=N_r(G)=& x{2k-1\choose r}+y{2k-2\choose r}+{z\choose r}+{k+1\choose r}-{k-1\choose r-2}\\
        +&(n-x(2k-2)-y(2k-3)-(z-1)-k-1){k-1\choose r-1}.
    \end{aligned}
\end{equation*}
Note that $g_2(x,y,z,n)={k-1\choose r-1}n+g(x,y,z)-{k-1\choose r-2}$. The optimal value of $g_2(x,y,z,n)$ under the constriction $k-1+x(k-1)+y(k-2)+\lf\frac{z-1}{2}\rf\leq s$ and $1\leq z\leq 2k-1$ is denoted as $M_2$. Every solution of $(x,y,z)$ responding to a  $\{C_{\geq 2k},M_{s+1}\}$-free graph.
\end{mycase}
Then we have $N_r(G)\leq \max\{M_1,M_2\}$ and we are done.
\end{reprooff}

Now we will use Theorem \ref{thm: exeven when n large enough} to prove Theorem \ref{cor: ex(c2k)}.

\begin{reproofcor}
    Let $G\in Ex(n,K_2,\{C_{\ge 2k},M_{s+1}\})$. We will use the same notations as that in the proof of Theorem \ref{thm: exeven when n large enough}. Then we have $G=St(G,B_1,s_1)$, and for any $i\geq 2$, $B_i$ is a clique, $k+2\le v(B_i)\le 2k-1$ and $d_{B_i}(v)=d(v)\geq k$ for every $v\in V(B_i)\setminus \{s_1\}$ if $V(G)\setminus V(B_1)\not=\emptyset$. Also $\nu(B_1)\ge k-1$. We first have the following claim.
    \begin{claim}\label{claim}
        We may assume that $ v(B_i)= 2k-1$ for every $i\geq 2$ if $V(G)\setminus V(B_1)\not=\emptyset$.
    \end{claim}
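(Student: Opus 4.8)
The plan is to prove Claim~\ref{claim} by showing that, in the $r=2$ setting, any block $B_i$ ($i\ge 2$) of order strictly less than $2k-1$ can be enlarged to order $2k-1$ without decreasing the edge count and without creating a long cycle or exceeding the matching bound, so that we may assume every such block is a $K_{2k-1}$. Recall from Section~\ref{sec: pre} that each $B_i$ with $i\ge 2$ is already a clique with $k+2\le v(B_i)\le 2k-1$, that $\nu(B_i)=\lf (v(B_i)-1)/2\rf$ since $s_1$ is covered by each maximum matching in $B_1$, and that $I_X(G)\subseteq V(B_1)$ with $|I_X(G)|=\Theta(n)$ is large. The key arithmetic fact for $r=2$ is that adding one vertex to a clique of order $m$ (sharing only $s_1$, hence contributing a fresh vertex joined to the $m-1$ non-cut vertices) gains $m-1$ edges, while the vertex removed from $I_X(G)$ loses only $d^2(v)=k-1$ edges; since $m-1\ge (k+2)-1=k+1>k-1$, every such switch strictly increases $e(G)$.

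First I would make precise the switch operation: take $v\in I_X(G)$ (possible since $|I_X(G)|$ is huge) and form $G[v\to B_h]$, i.e.\ delete $v$'s edges to $S_0$ and join $v$ to all of $V(B_h)\setminus\{s_1\}$, thereby enlarging $B_h$ by one vertex. I would check the two invariants. For the cycle condition, the enlarged block remains a clique of order at most $2k-1$, and by Lemma~\ref{lem: no block with size geq 2k+1} a clique of order $\le 2k-1$ contains no cycle of length $\ge 2k$, so $G$ stays $C_{\ge 2k}$-free. For the matching condition, I would use Lemma~\ref{lem: the matching number of star graph} together with the fact that enlarging a clique from even order $2j$ to odd order $2j+1$ leaves $\nu(B_h)=j$ unchanged, while enlarging from odd $2j+1$ to even $2j+2$ raises it by one; one must verify that the resulting total matching number still satisfies $\nu(G)\le s$. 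Here the parity analysis already recorded in Lemma~\ref{lem: all blocks are clique} (blocks of order $\le a-2=2k-2$ may be taken even) is the natural tool, and I would argue that we can push each block up to the odd value $2k-1$ because going from $2k-2$ to $2k-1$ does not increase $\lf (v(B_i)-1)/2\rf$.

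The main obstacle I anticipate is the matching bookkeeping when $v(B_h)$ is odd and smaller than $2k-1$: enlarging it by one step to the next even order raises $\nu$, so a single switch could violate $\nu(G)\le s$. The clean way around this is to exploit the edge-gain inequality $m-1>k-1$ to argue that whenever a switch is blocked by the matching constraint, the block must already be near maximal, and to handle the leftover block separately (this is exactly why the statement says ``we \emph{may assume}'' rather than asserting it for the genuinely extremal configuration). Concretely, I would first move all blocks of even order $2k-2$ up to $2k-1$ (free of charge in matching number, strictly gaining edges), and then observe that any remaining block of order $<2k-2$ can be merged into or grown toward $2k-1$ by a short sequence of switches whose net effect on $\nu(G)$ is controlled by the slack $s-\nu(G)$; since each switch strictly increases $e(G)$ while the extremal $G$ maximizes $e(G)$, the extremal configuration cannot retain a block of order below $2k-1$ unless doing so is forced by the matching bound, which is precisely the boundary case absorbed into the ``may assume'' phrasing. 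I expect the final write-up to reduce to: perform the parity-free enlargements, invoke extremality of $G$ to rule out strict edge increases, and conclude $v(B_i)=2k-1$ for all $i\ge 2$.
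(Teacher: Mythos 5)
Your approach goes in the wrong direction, and the parity bookkeeping that is supposed to rescue it is incorrect. The contribution of a block $B_i$ ($i\ge 2$) to $\nu(G)$ is $\lfloor (v(B_i)-1)/2\rfloor$, since $s_1$ is already covered by the matching of $B_1$. Hence growing a block from order $2k-2$ to order $2k-1$ raises its contribution from $k-2$ to $k-1$: it is exactly this step that is \emph{not} free of charge, contrary to what you claim (you appear to be computing $\lfloor v(B_i)/2\rfloor$ instead). Consequently, when $\nu(G)=s$ is tight, a block of order $2k-2$ (or $2k-3$, which can only be pushed to $2k-2$) simply cannot be enlarged to $2k-1$, and your fallback -- ``handle the leftover block separately'' -- is precisely the case the claim must resolve; as written it is an acknowledgement of the gap rather than an argument. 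An extremal graph really can contain such a stuck block, so an enlargement-only strategy cannot establish the ``may assume'' conclusion.

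The paper's proof goes the opposite way: if some block $B_h$ has order $b\le 2k-2$, it switches all $b-1$ vertices of $V(B_h)\setminus\{s_1\}$ to $S_0$, dismantling the block. This only decreases the matching number (so no constraint can be violated), keeps the graph $C_{\ge 2k}$-free, and the edge count changes by $(k-1)(b-1)-\binom{b}{2}=(b-1)\bigl(k-1-\tfrac{b}{2}\bigr)\ge 0$, which is nonnegative precisely because $b\le 2k-2$. This one-line computation is the whole proof, and it is specific to $r=2$: note that for general $r$ the analogous comparison fails (which is why Theorem \ref{thm: exodd when n large enough} needs the threshold $\tau_{k,r}$), so the direction of the switch is not a cosmetic choice. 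If you want to keep an enlargement flavour, you would still need the dismantling step to dispose of blocks blocked by the matching bound, at which point the enlargement becomes redundant.
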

\begin{proof}
    Suppose there exist $i$, say $i=h$, such that  $v(B_h)=b\leq 2k-2$. Then we can switch all vertices in $V(B_h)\setminus\{s_1\}$ to $S_0$ one by one. We denote the obtained graph by $G'$. Then $G'$ is $\{C_{\ge 2k},M_{s+1}\}$-free. Since
    $$e(G')-e(G)=(k-1)(b-1)-{b\choose2} = (b-1)\left(k-1-\frac{b}{2}\right)\geq 0,$$
    we are done.
\end{proof}

By Claim \ref{claim}, we have $y=0$ and $z=1$. Let $s=q(k-1)+t\geq k-1$, where $q=\lf\frac{s}{k-1}\rf$ and $0\leq t\leq k-2$. We denote two special star graphs.  Let $St^1(n,2k,q)$ (resp. $St^2(n,2k,q)$)  be a $St(G,B_1,s_1)$ consisting of $q$ blocks with $B_1\cong H_{n-(q-1)(2k-2),2k-1,k-1}$ (resp. $B_1\cong H_{n-(q-1)(2k-2),2k,k-1}$) and $q-1$ blocks being  $K_{2k-1}$.

If $\nu(B_1)=k-1$, then $e(G)=\max\{g_2(x,0,1,n):k-1+x(k-1)\le q(k-1)+t\}$ by the proof of  Theorem \ref{thm: exeven when n large enough}. It is not difficult to check that the optimal value is obtained when $x=q-1$. Then
\begin{equation}
    \begin{aligned}
        e(G)= & {k+1\choose 2}+(q-1){2k-1\choose 2}+ (k-1)(n-(q-1)(2k-2)-k-1)-1\\
        =& (k-1)n-{k\choose 2}+(k-1)(q-1),
    \end{aligned}
\end{equation}
and the equality holds if $G\cong St^1(n,2k,q)$.

If $\nu(B_1)=k$, then $e(G)=\max\{g_1(x,0,1,n):k+x(k-1)\le q(k-1)+t\}$ by the proof of  Theorem \ref{thm: exeven when n large enough}. It is not difficult to check that if $t>0$ (resp. $t=0$), then the optimal value is obtained when $x=q-1$ (resp. $x=q-2$).
If $t>0$, then
\begin{equation*}
    \begin{aligned}
        e(G)= & {k+1\choose 2}+(q-1){2k-1\choose 2}+ (k-1)(n-(q-1)(2k-2)-k-1)\\
        =& (k-1)n-{k\choose 2}+(k-1)(q-1)+1
    \end{aligned}
\end{equation*}
and the equality holds when $G\cong St^2(n,2k,q)$.
If $t=0$, then
\begin{equation*}
    \begin{aligned}
        e(G)= & {k+1\choose 2}+(q-2){2k-1\choose 2}+ (k-1)(n-(q-2)(2k-2)-k-1)\\
        =& (k-1)n-{k\choose 2}+(k-1)(q-2)<e(St^1(n,2k,q)).
    \end{aligned}
\end{equation*}
So $ex(n,\{C_{\geq 2k},M_{s+1}\})=(k-1)n-{k\choose 2}+(k-1)(q-1)+\epsilon, $ where  $\epsilon=1$ if $t\ge 1$, otherwise $\epsilon=0$
\end{reproofcor}

\section{Conclusion}
In this paper, we determined the exact value of $ex(n,\{C_{\geq a},M_{s+1}\})$. According to the proof above, the star graph plays an important role, and the extremal graphs can be constructed in a few patterns through the definition of the star graph.

\section*{Acknowledgement}
This work is  supported by  the National Natural Science Foundation of China~(Grant 12171272 \& 12426603).

\end{document}